\definecolor{refkey}{rgb}{0.9451,0.2706,0.4941}
\definecolor{labelkey}{rgb}{0.9451,0.2706,0.4941}
\numberwithin{equation}{section}
\crefname{thm}{Theorem}{Theorems}
\crefname{cor}{Corollary}{Corollaries}
\crefname{lem}{Lemma}{Lemmas}
\crefname{sublem}{Sublemma}{Sublemmas}
\crefname{prop}{Proposition}{Propositions}
\crefname{dfn}{Definition}{Definitions}
\crefname{defi}{Definition}{Definitions}
\crefname{ex}{Example}{Examples}
\crefname{claim}{Claim}{Claims}
\crefname{conj}{Conjecture}{Conjectures}
\crefname{conv}{Notation}{Notations}
\crefname{rem}{Remark}{Remarks}
\crefname{rmk}{Remark}{Remarks}
\crefname{prob}{Problem}{Problems}
\crefname{figure}{Figure}{Figures}
\crefname{table}{Table}{Tables}
\crefname{section}{Section}{Sections}
\crefname{subsection}{Section}{Sections}
\crefname{appendix}{Appendix}{Appendices}
\crefname{introthm}{Theorem}{Theorems}
\crefname{introcor}{Corollary}{Corollaries}
\crefname{introconj}{Conjecture}{Conjectures}
\newtheorem{thm}{Theorem}[section]
\newtheorem{prop}[thm]{Proposition}
\newtheorem{cor}[thm]{Corollary}
\newtheorem{introthm}{Theorem}
\newtheorem{introconj}[introthm]{Conjecture}
\theoremstyle{definition}
\newtheorem{dfn}[thm]{Definition}
\newtheorem{ex}[thm]{Example}
\newtheorem{conj}[thm]{Conjecture}
\theoremstyle{remark}
\newtheorem{rmk}[thm]{Remark}
\newcommand{\bZ}{\mathbb{Z}}
\newcommand{\bR}{\mathbb{R}}
\newcommand{\bC}{\mathbb{C}}
\newcommand{\bD}{\mathbb{D}}
\newcommand{\cM}{\mathcal{M}}
\newcommand{\cR}{\mathcal{R}}
\newcommand{\cT}{\mathcal{T}}
\newcommand{\al}{\alpha}
\newcommand{\sS}{\mathscr{S}}
\newcommand{\sSq}{\mathscr{S}_q}
\newcommand{\sfB}{\mathsf{B}}
\newcommand{\barT}{\overline{T}}
\title[On positivity of Roger--Yang skein algebras]{On positivity of Roger--Yang skein algebras}
\author[Hiroaki Karuo]{Hiroaki Karuo}
\address{Hiroaki Karuo, Department of Mathematics, Gakushuin University, Mejiro, Toshima-ku, Tokyo, Japan.}
\email{hiroaki.karuo@gakushuin.ac.jp}
\begin{document}
\maketitle

\begin{abstract}
We generalize the positivity conjecture on (Kauffman bracket) skein algebras to Roger--Yang skein algebras. 
To generalize it, we use Chebyshev polynomials of the first kind to give candidates of positive bases.
Moreover, the polynomials form a lower bound in the sense of \cite{Le18} and \cite{LTY}. 
We also discuss a relation between the polynomials and the centers of Roger--Yang skein algebras when the quantum parameter is a complex root of unity.
\end{abstract}

\tableofcontents

\section{Introduction}
\paragraph{{\bf Roger--Yang skein algebras and related works}} For $\cR$ a commutative unital ring with $q^{\pm1}$ (or $q^{\pm1/2}$), the (Kauffman bracket) skein algebra of a punctured surface $\Sigma$ is a free $\cR$-module generated by isotopy classes of framed links in $\Sigma\times (0,1)$ subject to some skein relations with multiplication defined by stacking with respect to the second factor $(0,1)$. 
For a punctured surface $\Sigma$, Roger--Yang \cite{RY} gave a generalization of skein algebras, called Roger--Yang skein algebras and denoted by $\sS_q^{{\rm RY}}(\Sigma)$, to give a relation between $\sS_1^{{\rm RY}}(\Sigma)$ and decorated Teichm\"uller spaces, where $\sS_1^{{\rm RY}}(\Sigma)$ is $\sS_q^{{\rm RY}}(\Sigma)$ with $\cR=\bC$ and $q^{1/2}=1$. 
Moreover, Moon--Wong \cite{MW21}, \cite{MW} formulated some relations between a subalgebra of $\sS_1^{{\rm RY}}(\Sigma)$ and the (upper) cluster algebra of $\Sigma$. 
In this paper, we use a slightly different version of the original Roger--Yang skein algebra $\sS_q^{{\rm RY}}(\Sigma)$, denoted by $\sSq(\Sigma)$. 
It is an $\cR$-module generated by isotopy classes of framed links and arcs in $\Sigma\times (0,1)$ subject to some relations with multiplication defined by stacking.  Note that the original Roger--Yang skein algebra $\sS_q^{{\rm RY}}(\Sigma)$ is contained in the tensor product $\sSq(\Sigma)\otimes_\cR \cR[p^{\pm 1/2}\mid \text{$p$ is a puncture of $\Sigma$}]$, where the second factor is a Laurent polynomial ring over $\cR$, see Proposition \ref{Prop_BKL}.

\paragraph{{\bf Positivity conjecture of skein algebras}}
In the case of $\cR=\bZ[q^{\pm1}]$, D. Thurston \cite{Thu} formulated the positivity conjecture of skein algebras. Here, the (original) positivity conjecture means certain bases of skein algebras have structure constants in the non-negative part $\bZ_{\geq0}[q^{\pm1}]$, where such bases are called \emph{positive bases}. 
The aim of this paper is to formulate a positivity conjecture of Roger--Yang skein algebras $\sSq(\Sigma)$. If the positivity conjecture holds, the positivity leads us to an expectation that there exists a (monoidal) categorification of Roger--Yang skein algebras similar to that of (Kauffman bracket) skein algebras \cite{QW21}.

Skein theoretic proof of the positivity conjecture of (Muller \cite{Mul}) skein algebras is known for 
\begin{itemize}
    \item the sphere with at most 3 punctures, 
    \item the torus \cite{FG00},
    \item the four-punctured sphere and the once-punctured torus \cite{Bou}, 
    \item marked disks and the marked annulus such that each boundary component has exactly one marked point \cite{IK23}, see also \cite{Yam}. 
\end{itemize}
Mandel--Qin affirmatively solved the positivity conjecture for a wide class of marked surfaces as a corollary by showing quantum theta functions correspond to the quantum bracelets basis \cite{MQ}. Note that their result includes the latter 2 cases in the above list. 
They also mentioned quantum tagged bracelets bases for punctured surfaces in the same paper. In particular, they defined such bases using principal coefficients and quantum cluster monomials. 
While Mandel--Qin's bases contain monomials realized by arcs incident to the same punctures, called cluster monomials in cluster algebras, we resolve such arcs using a defining relation in Roger--Yang skein algebras. 
This implies that the conjecture in this paper is different from the positivity conjecture in the context of cluster algebras.  
Inspired by the work, another aim of this paper is to give a geometric approach to quantum tagged bracelets bases through skein algebras.  
While the bases given in this paper may be related to Mandel--Qin's bases in some sense, we do not know explicit relations between them due to some differences mentioned above.

\paragraph{{\bf Main result and positivity conjecture}}
We show how to give a $\bZ[q^{\pm 1/2}]$-basis of $\sSq(\Sigma)$ from a pair of normalized sequences of polynomials (in Section \ref{Sec_posi}), where a sequence $(Q_n(x))_{n=0}^\infty$ of polynomials is \emph{normalized} if $Q_n(x)$ is a monic polynomial of degree $n$ for any $n\geq 0$. 
A pair of normalized sequences is \emph{positive} for a punctured surface $\Sigma$ if the $\bZ[q^{\pm 1/2}]$-basis obtained from the pair is a positive basis of $\sSq(\Sigma)$.

For the normalized sequences $(\barT_n(x))$ given in Section \ref{Sec_poly}, 
let $\sfB$ be the $\bZ[q^{\pm1/2}]$-basis of $\sSq(\Sigma)$ obtained from the sequences. 
Then, the following main result implies that $(\barT_n(x))$ forms a lower bound, in the sense of \cite{Le18} and \cite{LTY}, of any positive pairs of normalized sequences under some condition.  
\begin{introthm}[Theorem \ref{Thm_lower}]\label{Intro_thm}
If a pair of normalized sequences $((Q_n(x)), (R_n(x)))$ is positive for a punctured surface of genus $g\geq 1$ with at least 3 punctures, then each of $Q_n(x)$ and $R_n(x)$ is a $\bZ_{\geq0}[q^{\pm1/2}]$-linear combination of $\barT_i(x)$\ ($i=0,1,\dots, n$).
\end{introthm}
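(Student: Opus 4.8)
The plan is to fix $n$ and, using that the change of basis between the two normalized sequences is unitriangular, write $Q_n=\sum_{i=0}^{n}\lambda_i\,\barT_i$ with $\lambda_i\in\bZ[q^{\pm1/2}]$ and $\lambda_n=1$; the goal is then to force each $\lambda_i\in\bZ_{\ge0}[q^{\pm1/2}]$ by reading off the $\sfB'$-coefficients of one well-chosen product, and the statement for $R_n$ will follow by the same scheme. It is worth stressing up front that this positivity is invisible inside the commutative subalgebra generated by a single curve: there $\{Q_i(\gamma)\}_i$ has nonnegative structure constants even for many non-positive sequences, e.g.\ $Q_i=(x-1)^i$, whose transition matrix to $(\barT_i)$ has negative entries. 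One is therefore forced to pair a curve with a noncommuting partner and use an \emph{exact} product-to-sum formula, and this is where the hypotheses $g\ge1$ and ``at least three punctures'' enter.

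For $Q_n$: using $g\ge1$ I would fix a one-holed-torus subsurface $W\subseteq\Sigma$ all of whose essential curves remain essential, non-peripheral and pairwise non-isotopic in $\Sigma$, and inside $W$ take simple closed curves $\gamma=(1,0)$, $\delta=(0,1)$ meeting transversally once, together with $\eta_i=(i,1)$ and $\eta_i'=(i,-1)$ for $i\ge1$; these are pairwise non-isotopic and distinct from each $\gamma^{(j)}:=\barT_j(\gamma)$. Since $\gamma$ and $\delta$ avoid the punctures, the Frohman--Gelca identity — a statement about Chebyshev cablings of two curves meeting once, hence valid in $\sSq(\Sigma)$ — gives
\[
\gamma^{(i)}\cdot\delta=q^{\,i}\,\eta_i+q^{-i}\,\eta_i'\quad(i\ge1),\qquad \gamma^{(0)}\cdot\delta=\delta .
\]
Recalling from Section~\ref{Sec_posi} that the $\sfB'$-basis vector attached to ``$\gamma$ with multiplicity $m$'' is $Q_m(\gamma)$ and that attached to a single (non-peripheral) essential curve $c$ is $Q_1(c)$, I would compute, writing $Q_1(x)=x+a_1$,
\[
Q_n(\gamma)\cdot Q_1(\delta)=\sum_{i=0}^{n}\lambda_i\,\gamma^{(i)}\delta+a_1\sum_{i=0}^{n}\lambda_i\,\barT_i(\gamma)=\lambda_0\,\delta+\sum_{i=1}^{n}\lambda_i\bigl(q^{\,i}\eta_i+q^{-i}\eta_i'\bigr)+a_1\sum_{i}\lambda_i\,\barT_i(\gamma).
\]
The curves $\delta,\eta_1,\dots,\eta_n,\eta_1',\dots,\eta_n'$ are pairwise non-isotopic and distinct from the ``$\gamma$ only'' multicurves carried by the last sum; rewriting that sum in the $\sfB'$-basis vectors $\{Q_k(\gamma)\}_k$ and using $\delta=Q_1(\delta)-a_1$, $\eta_i=Q_1(\eta_i)-a_1$, $\eta_i'=Q_1(\eta_i')-a_1$ (which only change the coefficient of the empty link) shows that in $\sfB'$ the coefficient of $Q_1(\delta)$ equals $\lambda_0$ and that of $Q_1(\eta_i)$, resp.\ $Q_1(\eta_i')$, equals $\lambda_i q^{\,i}$, resp.\ $\lambda_i q^{-i}$. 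As $Q_n(\gamma)\cdot Q_1(\delta)$ is a product of two members of the positive basis $\sfB'$, these coefficients lie in $\bZ_{\ge0}[q^{\pm1/2}]$, and since multiplication by a monomial in $q^{\pm1/2}$ is a bijection of $\bZ_{\ge0}[q^{\pm1/2}]$ onto itself, $\lambda_i\in\bZ_{\ge0}[q^{\pm1/2}]$ for all $i$.

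For $R_n$ the same idea should go through with $\gamma$ replaced by a loop $\ell$ encircling a single puncture $p$ and $\delta$ replaced by an arc $\tau$ ending at $p$ (so $\tau$ meets $\ell$ once): multiply the $\sfB'$-basis vectors $R_n(\ell)$ and (the basis vector of) $\tau$ and read off coefficients. The ``at least three punctures'' hypothesis is what lets one choose $\tau$ and keep all arcs and curves produced in the resolution essential, pairwise non-isotopic, and reduced in the Roger--Yang sense. The hard part — and the one feature with no counterpart in the Kauffman bracket case — is to establish the \emph{exact} product-to-sum identity $\ell^{(i)}\cdot\tau=q^{\,c_i}(\cdots)+q^{-c_i}(\cdots)$ in $\sfB$-basis vectors, with monomial leading coefficients and no contaminating lower terms; this must be extracted from the Roger--Yang relations localized near $p$ together with the Chebyshev cabling recursion, and it is sensitive to the precise choice of $(\barT_n)$ in Section~\ref{Sec_poly}. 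Granting that identity, positivity of $\sfB'$ forces the $\barT_i$-coefficients of $R_n$ to be nonnegative exactly as above.
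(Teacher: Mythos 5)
Your treatment of $Q_n$ (the closed-curve sequence) is essentially the known argument of L\^e/L\^e--Thurston--Yu: pair $\gamma$ with a curve $\delta$ meeting it once, apply the product-to-sum formula $T_i(\gamma)\cdot\delta=q^{i}\tau^{i}(\delta)+q^{-i}\tau^{-i}(\delta)$, and read off coefficients in the basis. The paper does not reprove this; it invokes Theorem \ref{Thm_Le} and observes that the proof carries over verbatim to $\sSq(\Sigma)$. So that half is fine, if redundant in detail.

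The $R_n$ half has a genuine gap, in two respects. First, the setup is wrong: you propose to evaluate $R_n$ at a loop $\ell$ encircling a single puncture, but such a loop is trivial in $\sSq(\Sigma)$ (relation (C) makes it the scalar $q+q^{-1}$), and in any case the basis $\sfB_{\mathbf{Q}}$ applies $R_n$ only to \emph{arcs}; the coefficients of $R_n$ can only be detected by expanding $R_n(\al)$ for $\al$ an arc joining two distinct punctures $p,p'$, paired against a second arc $\beta$ from $p$ to a third puncture $p''$ that crosses $p_\al$ exactly once (this is where ``at least three punctures'' is used --- not to keep curves essential, as you suggest). Second, and more importantly, the exact product-to-sum identity that you explicitly defer (``Granting that identity\dots'') is the actual mathematical content of the theorem beyond the Kauffman bracket case. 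The paper supplies it as Proposition \ref{Prop_twist}: writing $\barT_{2k}(\al)=T_k(p_\al)$ and $\barT_{2k+1}(\al)=(S_k-S_{k-1})(p_\al)\cdot\al$, one proves by induction on $k$, using $S_{k+1}-S_k=T_{k+1}-(S_k-S_{k-1})$ together with the Dehn-twist formula of Proposition \ref{Prop_Le} and the resolution $\beta\cdot\al=q^{1/2}\overline{\al}+q^{-1/2}\underline{\al}$ at $p$, that
\begin{align*}
\beta\cdot(S_k(p_\al)-S_{k-1}(p_\al))\al=q^{(2k+1)/2}\tau^{k}(\overline{\al})+q^{-(2k+1)/2}\tau^{-k}(\underline{\al}),
\end{align*}
yielding the clean two-term formula \eqref{eq;once} with all resulting diagrams pairwise non-isotopic basis elements. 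Without this, nothing forces the coefficients $c_k$ of $R_n(\al)=\sum_k c_k\barT_k(\al)$ to be visible as individual structure constants. A further omitted step is the elimination of the constant $a$ in $R_1(x)=x+a$: the paper substitutes $x=R_1(x)-a$ back into the expansion, collects the constant term $d$, and uses that both $d$ and $-d$ lie in $\bZ_{\geq0}[q^{\pm1/2}]$ to conclude $d=0$ and hence $a=0$; your sketch does not close this loop.
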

\noindent A remarkable observation is that we use ($\barT_n(x)$) instead of ($x^n$) for arcs, see Remark \ref{Rmk_non-triv}.

The following is a positivity conjecture of Roger--Yang skein algebras.
\begin{introconj}[Conjecture \ref{Conj_posiRY}]
The $\bZ[q^{\pm1/2}]$-basis $\sfB$ is positive for any punctured surfaces with at least 2 puncntures except for spheres with at most 3 punctures.
\end{introconj}
\noindent Note that one can consider a similar claim for a once-punctured surface with genus $g\geq1$ and the claim holds from the positivity of skein algebras. See Remark \ref{Rmk_conj} (1).

\paragraph{\textbf{Acknowledgments.}}
The author would like to thank Tsukasa Ishibashi, Thang L\^e, and Helen Wong for valuable comments. 
The author was partially supported by JSPS KAKENHI Grant Numbers JP22K20342, JP23K12976. 

\section{Chebyshev polynomials}\label{Sec_poly}
Chebyshev polynomials of the first kind $T_n(x)$ (resp. of the second kind $S_n(x)$) are defined from the following initial data and the recurrence relation; 
\begin{align*}
&T_0(x)=2,\ T_1(x)=x,\ T_{n+1}(x)=xT_{n}(x)-T_{n-1}(x)\ (n\geq 1),\\
&S_{-1}(x)=0,\ S_0(x)=1,\ S_1(x)=x,\ S_{n+1}(x)=xS_{n}(x)-S_{n-1}(x)\ (n\geq 1).
\end{align*}

It is well-known that 
\begin{align}
&T_m(x)\cdot T_n(x)=T_{m+n}(x)+T_{|m-n|}(x).\label{rel:Chebyshev}
\end{align}
For $n,m\geq 0$, one can easily show that 
\begin{align}
&T_{2n}(x)=T_{n}(x^2-2),\label{rel:Cheby_even}\\
&(S_{n}(x)-S_{n-1}(x))(S_{m}(x)-S_{m-1}(x))(x+2)=T_{n+m+1}(x)+T_{|n-m|}(x),\label{rel:Cheby_SS}\\
&T_n(x)(S_{m}(x)-S_{m-1}(x))=(S_{n+m}(x)-S_{n+m-1}(x))+(S_{\varepsilon(n,m)}(x)-S_{\varepsilon(n,m)-1}(x))\label{rel:Cheby_TS}
\end{align}
with 
$$\varepsilon(n,m)
=\begin{cases}
n-m-1&\text{if $n>m$},\\
m-n&\text{if $n\leq m$}
\end{cases}.$$

For later convenience, set 
\begin{align}
\text{$\barT_n(x):=T_n(x)\ (n\geq 1)$ and $\barT_0(x):=1$.} \label{def;barT}
\end{align}
For an integer $n\geq 1$, one can easily show that 
\begin{align}
\barT_{2n+1}(x)=\big(S_{n}-S_{n-1}\big)(x^2-2)\cdot x.\label{rel;important}
\end{align}


\begin{ex}
$S_2(x)-S_1(x)=x^2-x-1,\quad S_3(x)-S_2(x)=x^3-x^2-2x+1$. 
\end{ex}

A sequence $(Q_n(x))$ of polynomials over $\bZ[q^{\pm 1/2}]$ is \emph{normalized} if for each $n\in \bZ_{\geq0}$, $Q_n(x)$ is a monic polynomial of degree $n$, where monic means the coefficient of the highest term is $1$. 

\begin{dfn}
For two normalized sequences $(Q_i(x)), (R_i(x))$, we write $(Q_i(x))\geq (R_i(x))$ if $Q_n(x)$ can be presented as a $\bZ_{\geq0}[q^{\pm1/2}]$-linear combination of $R_i(x)\ (i=0,1,\dots, n)$ for any $n$. 
\end{dfn}

\begin{rmk}\label{Rmk_ineq}
It is easy to see that, for any positive integer $n$, we have 
\begin{align*}
&\barT_0(x)=S_0(x)-S_{-1}(x)=S_{0}(x)=1,\\ 
&\barT_n(x)=(S_n(x)-S_{n-1}(x))+(S_{n-1}(x)-S_{n-2}(x)),\\
&S_{2n}(x)=\sum_{i=0}^{n}\barT_{2n-2i}(x),\quad S_{2n+1}(x)=\sum_{i=0}^{n}\barT_{2n-2i+1}(x). 
\end{align*}
These imply that 
$(S_n(x)-S_{n-1}(x))\leq (\barT_n(x))\leq (S_n(x))\leq (x^n)$. 
\end{rmk} 

\section{Roger--Yang skein algebras and their classical versions}\label{Sec_RY}
For a surface $\Sigma$, let $\cM$ be a finite subset of the interior of $\Sigma$. 
A \emph{marked surface} is a pair $(\Sigma,\cM)$. 
A \emph{punctured surface} is a marked surface with a closed surface $\Sigma$ and a non-empty set $\cM$. 
We call each point of $\cM$ a \emph{puncture}. 
In the following, suppose any punctured surface has at least 2 punctures.

A punctured surface is \emph{small} if the complement of a small neighborhood of any arc connecting two different punctures is either the disk or once-punctured disk. 
A punctured surface is \emph{planar} if the underlying surface is the sphere. 
Note that any small punctured surface is planar.

\paragraph{\textbf{$\cM$-tangles and diagrams.}}
An $\cM$-tangle $\al$ of $(\Sigma, \cM)$ is an embedded framed loops and arcs such that 
\begin{enumerate}
    \item $\partial\al\cap (\Sigma\times (0,1))\subset \cM\times(0,1)$, and
    \item the normal vectors at the endpoints are vertical pointing to $1$. 
\end{enumerate}

We regard the empty set as an $\cM$-tangle. 
Two $\cM$-tangles are \emph{isotopic} if they are isotopic in the class of $\cM$-tangles. 

By isotopy, we deform an $\cM$-tangle to one with vertical framing. 
Then, we project it onto $\Sigma\times \{1/2\}$ (with slight perturbation if need) so that 
\begin{enumerate}
    \item the image has only transverse crossings as singularities,
    \item in the interior of $\Sigma$, the image has only transversal double crossings, and
    \item each crossing is equipped with over/under information with respect to the second factor. 
\end{enumerate}

A loop on $\Sigma$ is \emph{trivial} if it bounds a disk or a once-punctured disk. 
An $\cM$-tangle diagram $\al$ is \emph{simple} if it has no crossings, no trivial loops, and the boundary of a small neighborhood of each puncture intersects with $\al$ at most once. If a simple $\cM$-tangle diagram consists of a loop (resp. an arc), we call it a \emph{simple loop} (resp. a \emph{simple arc}).


\paragraph{\textbf{Roger--Yang skein algebras.}}
Let $\cR$ be a commutative unital ring with a distinguished invertible $q^{1/2}$. 
Let $(\Sigma,\cM)$ be an oriented punctured surface.

The \emph{Roger--Yang skein algebra} of $(\Sigma,\cM)$ is the $\cR$-module generated by isotopy classes of $\cM$-tangles in $\Sigma\times (0,1)$ subject to the relations (A)--(D) with multiplication defined by stacking with respect to $(0,1)$, which is denoted by $\sSq(\Sigma)$. 

\begin{align*}
&({\rm A}) \begin{array}{c}\includegraphics[scale=0.18]{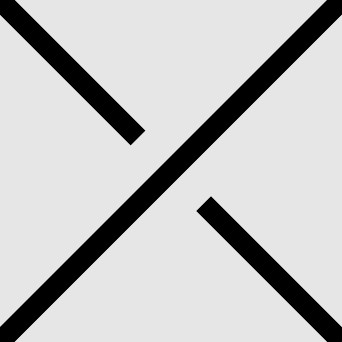}\end{array}=q\begin{array}{c}\includegraphics[scale=0.18]{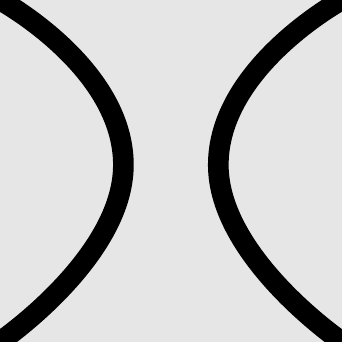}\end{array}+q^{-1}\begin{array}{c}\includegraphics[scale=0.18]{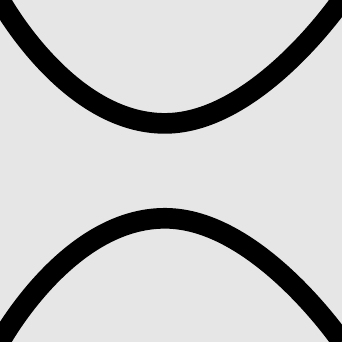}\end{array},\qquad ({\rm B}) \begin{array}{c}\includegraphics[scale=0.18]{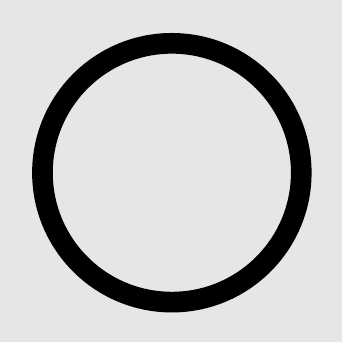}\end{array}=(-q^2-q^{-2})\begin{array}{c}\includegraphics[scale=0.18]{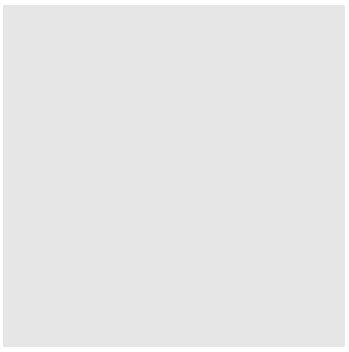}\end{array},\\
&({\rm C}) \begin{array}{c}\includegraphics[scale=0.18]{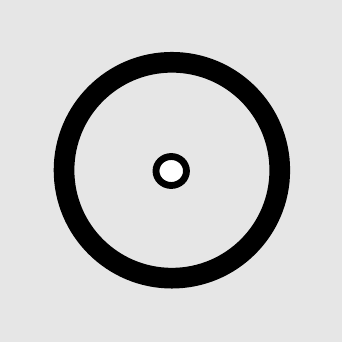}\end{array}=(q+q^{-1})\begin{array}{c}\includegraphics[scale=0.18]{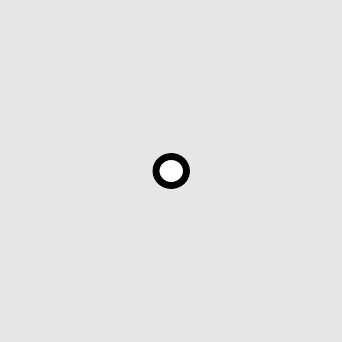}\end{array},\qquad
({\rm D}) \begin{array}{c}\includegraphics[scale=0.18]{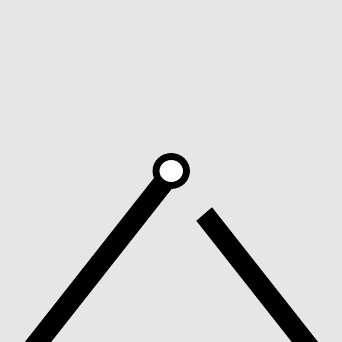}\end{array}=q^{1/2}\begin{array}{c}\includegraphics[scale=0.18]{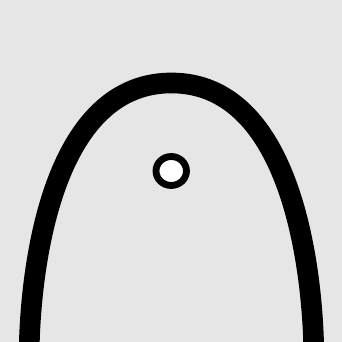}\end{array}+q^{-1/2}\begin{array}{c}\includegraphics[scale=0.18]{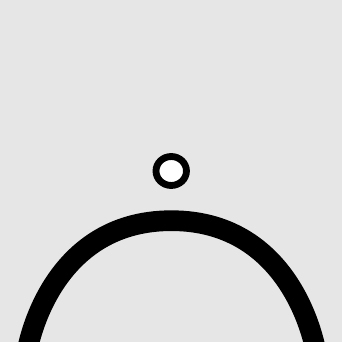}\end{array},
\end{align*}
where we depict a puncture as a white vertex.

It is known that the following set is a basis of $\sSq(\Sigma)$ as an $\cR$-module \cite{RY}, \cite{BKL}. 
\begin{align}
\bar{\mathsf{B}}=\{\text{isotopy classes of simple $\cM$-tangle diagrams on $(\Sigma,\cM)$} \}\label{basis}
\end{align}

There are some relations between $\sSq(\Sigma)$ and the original Roger--Yang skein algebra $\sS_q^{{\rm RY}}(\Sigma)$, where $\sS_q^{{\rm RY}}(\Sigma)$ is defined by replacing $\cR$ with the Laurent polynomial ring $\cR[p^{\pm1}\mid \text{$p\in \cM$}]$ and using (D') instead of (D) in the definition of $\sSq(\Sigma)$; 
\begin{align*}
({\rm D'}) \begin{array}{c}\includegraphics[scale=0.18]{draws/crossing_int.pdf}\end{array}=p^{-1}\Big(q^{1/2}\begin{array}{c}\includegraphics[scale=0.18]{draws/intarcs_over.pdf}\end{array}+q^{-1/2}\begin{array}{c}\includegraphics[scale=0.18]{draws/intarcs_under.pdf}\end{array}\Big)\quad
\text{around the puncture $p$.}
\end{align*}
\begin{prop}[\cite{BKL}]\label{Prop_BKL}
There is an algebra embedding 
$$\iota\colon \sS_q^{{\rm RY}}(\Sigma)\hookrightarrow \sSq(\Sigma)\otimes_\cR \cR[p^{\pm1/2}\mid \text{$p\in \cM$}]$$ 
given by $\iota(\al)=(\prod_{p\in \cM}p^{-I(\al,\partial p)/2})\al$, where $I(\al,\partial p)$ is the geometric intersection number of $\al$ and the boundary of a small neighborhood of $p$ in $\Sigma$. 
\end{prop}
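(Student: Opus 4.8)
The plan is to verify directly that the assignment $\iota(\alpha) = \bigl(\prod_{p \in \cM} p^{-I(\alpha, \partial p)/2}\bigr)\,\alpha$ is well-defined on isotopy classes of $\cM$-tangles and respects each of the defining relations, then check multiplicativity and injectivity. The point is that the only difference between $\sS_q^{\mathrm{RY}}(\Sigma)$ (defined with relation (D')) and $\sSq(\Sigma)$ (defined with relation (D)) is the bookkeeping factor $p^{-1}$ that appears when two arc strands merge at the puncture $p$; the map $\iota$ absorbs exactly this factor. First I would record that for an $\cM$-tangle $\alpha$, the intersection number $I(\alpha, \partial p)$ equals the number of endpoints of (the resolution-free representative of) $\alpha$ landing at $p$, so that the prefactor $\prod_p p^{-I(\alpha,\partial p)/2}$ is an isotopy invariant and is well-defined as an element of $\cR[p^{\pm 1/2} \mid p \in \cM]$.

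Next I would check the four relations. Relations (A) and (B) involve only crossings and trivial loops away from the punctures, so $I(\cdot, \partial p)$ is unchanged on all terms and the prefactor is common to both sides — the relation in $\sS_q^{\mathrm{RY}}$ maps to the corresponding relation in $\sSq$ multiplied by the common monomial. Relation (C), the puncture-skein relation, changes $I(\cdot,\partial p)$ by $2$ (a loop around $p$ becomes the empty diagram at $p$), so on the left side the prefactor carries an extra $p^{-1}$ relative to the right side; but in $\sS_q^{\mathrm{RY}}$ relation (C) is the same as in $\sSq$ (it does not carry a $p^{\pm 1}$), so one must check the $p$-powers match — here the key is that in the original Roger--Yang setup relation (C) actually reads with the puncture variable, and one tracks that $\iota$ converts it to the $p$-free version (C) of $\sSq$. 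The essential computation is relation (D'): a crossing of two arc-ends near $p$ resolves into two diagrams in each of which the two ends have merged into a single strand through $p$, dropping $I(\cdot, \partial p)$ by $2$, hence raising the prefactor by $p$; this exactly cancels the explicit $p^{-1}$ in (D'), turning (D') into the $p$-free relation (D) of $\sSq$ after multiplying through by the prefactor. So $\iota$ descends to an $\cR$-linear map on $\sS_q^{\mathrm{RY}}(\Sigma)$.

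Multiplicativity follows because stacking two $\cM$-tangles adds their endpoint counts at each puncture, so $I(\alpha\beta, \partial p) = I(\alpha,\partial p) + I(\beta,\partial p)$ up to the resolutions needed to simplify near $p$ — and those resolutions are precisely governed by (D') and (D), which we have already reconciled — hence $\iota(\alpha\beta) = \iota(\alpha)\iota(\beta)$. For injectivity I would use the known basis: the simple $\cM$-tangle diagrams $\bar{\mathsf B}$ of \eqref{basis} form an $\cR$-basis of $\sSq(\Sigma)$, and an analogous set is an $\cR[p^{\pm 1}\mid p\in\cM]$-basis of $\sS_q^{\mathrm{RY}}(\Sigma)$; the map $\iota$ sends a basis element $\alpha$ to a distinct monomial $\bigl(\prod_p p^{-I(\alpha,\partial p)/2}\bigr)\alpha$ in the target, and these images are $\cR$-linearly independent because the $\alpha$ are linearly independent and the monomial prefactors are units, so no cancellation can occur across a linear combination. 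I expect the main obstacle to be the careful treatment of relation (D') / (D): one must be precise about how many strands pass through a puncture before and after resolving an interior-arc crossing adjacent to it, and confirm that the half-integer exponents $I(\alpha,\partial p)/2$ are always integers or correctly paired so that $\iota$ genuinely lands in $\sSq(\Sigma)\otimes_\cR \cR[p^{\pm 1/2}\mid p\in\cM]$ rather than needing finer roots; this is really a local parity/bookkeeping check at each puncture, and once set up correctly the rest is formal.
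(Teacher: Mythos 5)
The paper does not prove this proposition at all --- it is quoted from \cite{BKL} --- so your direct verification is the only argument on the table, and its core is sound. The decisive computation, namely that resolving a puncture crossing via (D') drops $I(\cdot,\partial p)$ by $2$, so that the prefactor $\prod_p p^{-I/2}$ gains a factor of $p$ which exactly absorbs the explicit $p^{-1}$ in (D') and turns it into the $p$-free relation (D), is correct; so are the multiplicativity argument (endpoint counts at each puncture add under stacking, before any resolution is performed) and the injectivity argument via the basis $\bar{\mathsf{B}}$ of simple diagrams, on which $I(\alpha,\partial p)\in\{0,1\}$ --- which is precisely why the target needs $p^{\pm1/2}$ rather than $p^{\pm1}$.

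The one genuine slip is your treatment of relation (C). A loop encircling a single puncture $p$ is parallel to $\partial p$, so its geometric intersection number with $\partial p$ is $0$, not $2$; relation (C) therefore does not change $I(\cdot,\partial p)$ at all, the prefactor is common to both sides, and (C) maps to (C) with nothing to reconcile. Your hedge that in the original Roger--Yang setup relation (C) ``reads with the puncture variable'' is also contrary to the definitions in force here: the paper stipulates that $\sS_q^{\rm RY}(\Sigma)$ differs from $\sSq(\Sigma)$ only in the ground ring and in using (D') in place of (D), so relations (A), (B), (C) are literally identical in the two algebras. This error is harmless in the sense that the correct statement makes the check easier, not harder, but as written that step of your plan rests on a false premise and should be replaced by the observation that a peripheral loop has zero geometric intersection with $\partial p$.
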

Although we consider a positivity conjecture of Roger--Yang skein algebras $\sSq(\Sigma)$ in the paper, we can also treat that of $\sS_q^{{\rm RY}}(\Sigma)$ with a slight modification.  See Remark \ref{Rmk_conj} (2).

Now, let $\sS_1^{{\rm RY}}(\Sigma)$ denote $\sS_q^{{\rm RY}}(\Sigma)$ with
$\cR=\bC$ and $q^{1/2}=1$, called the \emph{curve algebra} \cite{MW21}. Then, some important results on original Roger--Yang skein algebras are the following. 
\begin{thm}[\cite{RY}, \cite{MW21}, \cite{MW}, \cite{BKL}]
There is a Poisson algebra homomorphism $\Phi\colon \sS_1^{{\rm RY}}(\Sigma)\to C^{\infty}[\cT^{{\rm dec}}(\Sigma)]$, where $C^{\infty}[\cT^{{\rm dec}}(\Sigma)]$ denotes the ring of $\bC$-valued smooth functions on the decorated Teichm\"uller space of $\Sigma$. 
Moreover, $\Phi$ is injective. 
\end{thm}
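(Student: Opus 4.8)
The plan is to realize $\Phi$ geometrically on a generating set of $\sS_1^{\mathrm{RY}}(\Sigma)$ — framed loops, arcs between punctures, and the puncture variables $p$ — via holonomy traces and Penner $\lambda$-lengths, then to verify the defining relations (A)--(D$'$) and the Poisson structures, and finally to prove injectivity by a leading-term analysis in $\lambda$-length coordinates. First I would fix a point of $\cT^{\mathrm{dec}}(\Sigma)$, that is, a complete finite-area hyperbolic structure on $\Sigma\setminus\cM$ with a cusp at each puncture together with a horocycle $h_p$ at each $p\in\cM$; this gives a holonomy $\pi_1(\Sigma\setminus\cM)\to\mathrm{PSL}_2(\bR)$ which I equip with a sign-refined lift $\widetilde\rho$ (fixing a spin structure, normalized so that peripheral loops have trace $-2$). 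Define $\Phi$ on generators by $\Phi(\gamma)=-\mathrm{tr}\,\widetilde\rho(\gamma)$ for a loop $\gamma$ (framing is immaterial at $q=1$), by $\Phi(\alpha)=\lambda(\alpha)$, the decorated $\lambda$-length of the geodesic in the class of an arc $\alpha$ measured against the horocycles at its endpoints, and by sending $p$ to the corresponding horocyclic length parameter. Since holonomy and $\lambda$-lengths vary real-analytically over $\cT^{\mathrm{dec}}(\Sigma)$, the images lie in $C^\infty[\cT^{\mathrm{dec}}(\Sigma)]$; one then extends $\Phi$ $\bC$-linearly along the basis $\bar{\mathsf B}$ of simple diagrams and multiplicatively.

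Next I would check the relations term by term. At $q=1$, relation (A) becomes the $\mathrm{SL}_2$ trace identity $\mathrm{tr}(AB)+\mathrm{tr}(AB^{-1})=\mathrm{tr}(A)\mathrm{tr}(B)$ together with the sign bookkeeping of the spin refinement; relations (B) and (C) reduce to $-\mathrm{tr}(\mathrm{Id})=-2$ and $-\mathrm{tr}(\text{parabolic})=2$ under the trace normalization; and relation (D$'$), with its $p^{-1}$ factor, is exactly Penner's $\lambda$-length Ptolemy relation for two geodesic arcs crossing at a cusp, so here Penner's triangle-and-horocycle computations carry the argument. For the Poisson statement, the bracket on $\sS_1^{\mathrm{RY}}(\Sigma)$ is the semiclassical limit $\{x,y\}=\frac{d}{d\hbar}\big|_{\hbar=0}[\tilde x,\tilde y]$ with $q=e^{\hbar}$, while on $C^\infty[\cT^{\mathrm{dec}}(\Sigma)]$ it is the Weil--Petersson bracket extended over the decoration, for which Penner's formula gives $\{\log\lambda_a,\log\lambda_b\}$ as an explicit signed count along any triangulation. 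It then suffices to match brackets on generators and extend by Leibniz: the loop--loop case is Goldman's formula, which agrees termwise with the $\hbar$-linear part of the skein commutator coming from (A), while the loop--arc and arc--arc cases follow by comparing the $\hbar$-linear contributions of interior crossings (relation (A)) and cusp crossings (relation (D$'$)) against the Penner bracket; the arc--arc comparison is the delicate computation.

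The main obstacle is injectivity. I would fix an ideal triangulation $\Delta$ of $(\Sigma,\cM)$; by Penner's theorem the $\lambda$-lengths $(\lambda_e)_{e\in\Delta}$ identify $\cT^{\mathrm{dec}}(\Sigma)$ with $\pos^{\Delta}$. Repeatedly applying the Ptolemy and skein relations, each simple diagram $b\in\bar{\mathsf B}$ has $\Phi(b)$ expressed as a (subtraction-free) Laurent polynomial in the $\lambda_e$, with a distinguished leading exponent vector $[b]\in\bZ^{\Delta}$ read off from the geometric intersection numbers of $b$ with $\Delta$, supplemented by the winding/tagging data when $b$ contains arcs spiralling into a puncture. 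The crucial input is that $b\mapsto[b]$ is injective on $\bar{\mathsf B}$, which follows because simple multicurves-and-arcs are determined by their normal coordinates with respect to a fixed triangulation. Given $0\neq x=\sum_b c_b\,b$ with finite support $S$, I would choose a generic weight vector $w\in\bZ^{\Delta}$ so that $b\mapsto\langle w,[b]\rangle$ is injective on $S$; evaluating along the curve $\lambda_e=s^{w_e}$, the function $\Phi(x)$ then has leading term $c_{b_0}\,s^{\langle w,[b_0]\rangle}(1+o(1))$ for the maximizing $b_0$, so $\Phi(x)\not\equiv 0$. The genuine work here is twofold: (i) producing the explicit Laurent expansion of $\Phi(b)$ with a controlled leading monomial — the state-sum expansion of a skein element in Penner coordinates — and (ii) the injectivity of $b\mapsto[b]$, which rests on the theory of measured laminations; an alternative route for injectivity is through the embedding of $\sS_1^{\mathrm{RY}}(\Sigma)$ into the (upper) cluster algebra of $\Sigma$ (cf.\ \cite{MW21}, \cite{MW}) together with linear independence of the relevant cluster-type monomials.
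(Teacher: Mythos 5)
The paper does not prove this statement at all: it is imported as a black box from the cited references, with the construction of $\Phi$ and the Poisson property due to Roger--Yang and the injectivity due to Moon--Wong (see also \cite{BKL}). Your outline is a faithful reconstruction of exactly those arguments --- trace functions and $\lambda$-lengths for the definition, the $\mathrm{SL}_2$ trace identity and Penner's horocycle computation for relations (A)--(D$'$), Goldman/Weil--Petersson matching for the bracket, and a leading-term analysis in $\lambda$-length coordinates (equivalently, the comparison with the cluster algebra) for injectivity --- so there is no divergence in method to report. The two places you flag as delicate (the arc--arc bracket comparison and the injectivity of normal coordinates in the presence of arcs spiralling into punctures) are indeed where the substance of \cite{RY} and \cite{MW21} lies, so as a proof sketch this is accurate but, as you acknowledge, not self-contained at those points.
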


\begin{thm}[\cite{MW}, see also \cite{FST}]
The algebra $\sS_1^{{\rm RY}}(\Sigma)$ has a subalgebra which is sandwiched by the cluster algebra and the upper cluster algebra of $\Sigma$. 
\end{thm}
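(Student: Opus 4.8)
\section*{Proof proposal}

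The plan is to realize both the curve algebra and the (upper) cluster algebra inside the ring of functions on decorated Teichm\"uller space and to compare them there. By the preceding theorem the Poisson homomorphism $\Phi\colon \sS_1^{{\rm RY}}(\Sigma)\to C^{\infty}[\cT^{{\rm dec}}(\Sigma)]$ is injective, so I may identify $\sS_1^{{\rm RY}}(\Sigma)$ with its image. Under $\Phi$, a simple arc connecting two punctures maps to the Penner lambda length of that arc, while a simple loop maps to (the negative of) its $\mathrm{SL}_2$-trace function; these are exactly the functions FST use to coordinatize the cluster structure. Fix an ideal triangulation $\Delta$ of $\Sigma$: its edges $\{\lambda_e\}_{e\in\Delta}$ form the initial cluster of the FST cluster algebra $\cA_\Sigma$, with exchange matrix read off from the signed adjacencies of $\Delta$. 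Let $A\subseteq\sS_1^{{\rm RY}}(\Sigma)$ be the subalgebra generated by all simple arcs, all simple loops, and the puncture variables, and aim to prove the sandwiching
$$\cA_\Sigma\ \subseteq\ A\ \subseteq\ \cU_\Sigma.$$

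For the left containment I would show that every cluster variable of $\cA_\Sigma$ already lies in $A$. The point is that a mutation in direction $e$ is realized inside the curve algebra by a skein computation: resolving the crossing of the old diagonal $e$ with the new diagonal $f$ of the relevant quadrilateral by the defining relations at $q=1$ yields $\lambda_e\lambda_f=\lambda_a\lambda_c+\lambda_b\lambda_d$, which is precisely the Ptolemy, i.e.\ exchange, relation. Hence flipping $\Delta$ repeatedly produces every cluster variable as an element of $A$, and since $\cA_\Sigma$ is generated by its cluster variables, $\cA_\Sigma\subseteq A$.

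For the right containment I would check that each generator of $A$ expands as a Laurent polynomial in the lambda lengths $\{\lambda_e\}_{e\in\Delta'}$ of every ideal triangulation $\Delta'$. Arcs satisfy this by the Laurent phenomenon for cluster variables. For a loop $\gamma$, threading it across $\Delta'$ and repeatedly applying relations (A), (C), (D) at $q=1$ expresses its trace as a Laurent polynomial in the edge lambda lengths of $\Delta'$; geometrically this is the standard formula writing the trace of $\gamma$ in terms of the coordinates of the arcs it crosses. As this holds for all $\Delta'$, every element of $A$ lies in $\bigcap_{\Delta'}\bC[\lambda_e^{\pm1}\colon e\in\Delta']=\cU_\Sigma$, giving $A\subseteq\cU_\Sigma$.

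The main obstacle is the bookkeeping at the punctures. In the punctured case FST's cluster variables are \emph{tagged} arcs, whereas the curve algebra contains only plain arcs incident to punctures; matching the two requires resolving monomials built from arcs sharing an endpoint. This is exactly where relation (C) enters: a loop encircling a puncture equals $2$ at $q=1$, which lets one rewrite a doubly tagged configuration as a product of plain arcs and thereby reconcile the tagging convention with the skein picture. Verifying that this resolution is consistent across all triangulations --- so that the two containments use compatible choices of decoration and that the loop trace is genuinely Laurent in every cluster --- is the delicate part; the remaining steps are formal once the dictionary between tagged arcs, lambda lengths, and simple diagrams is fixed.
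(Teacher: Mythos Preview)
The paper does not prove this theorem; it is stated purely as background and attributed to \cite{MW} (with \cite{FST} for the underlying cluster structure). There is therefore no ``paper's own proof'' to compare your proposal against.

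That said, your outline is broadly in the spirit of how Moon--Wong argue, so a few remarks may help you tighten it. First, the subalgebra in question is not the full curve algebra but a specific subalgebra (in \cite{MW} it is generated by arcs together with the puncture variables; loops are then shown to be expressible in terms of these, which is itself a nontrivial step rather than something to be folded into the definition of $A$). Second, your left containment sketch via Ptolemy relations only produces the \emph{plain} cluster variables. As you correctly flag, the FST cluster algebra for a punctured surface is generated by \emph{tagged} arcs, and the notched ends genuinely introduce new cluster variables that are not isotopy classes of plain arcs. Resolving this is the heart of the matter in \cite{MW}: one must identify the notched arcs with explicit elements of the curve algebra (built from plain arcs and the inverse puncture variables) and verify that the tagged exchange relations hold there. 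Your appeal to relation (C) alone does not accomplish this; the value $q+q^{-1}=2$ at $q=1$ is used, but the actual dictionary between a notched arc and a skein element requires more work than ``rewriting a doubly tagged configuration as a product of plain arcs.'' Third, for the right containment one must check Laurentness in every \emph{tagged} triangulation, not only in ideal triangulations obtained by flips of plain arcs, and again the tagged clusters are where the difficulty lies.

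In short: your strategy is the right shape, but the step you label ``the delicate part'' is in fact the entire content of the theorem, and your proposal does not yet supply the mechanism (the explicit tagged-arc/skein dictionary) that makes it go through.
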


\section{Bases of Roger--Yang skein algebras and positivity}\label{Sec_posi}
In the following, suppose that $\cR=\bZ[q^{\pm 1/2}]$. 

Let $\al$ be a simple arc connecting different punctures.
Then, we have 
\begin{align}
p_\al=\al^2-2,\label{formula:squared}
\end{align}
where $p_\al$ is a simple loop on $\Sigma\setminus \cM$ bounding a small neighborhood of $\al$.


For a non-small punctured surface $(\Sigma,\cM)$, we consider the set
\begin{align}
\sfB=\{\prod_{k}\barT_{m_k}(\gamma_k)\prod_{\ell}\barT_{n_\ell}(\al_\ell)\mid  \text{$m_k,n_\ell\in \bZ_{\geq0}$ and the following conditions}\};\label{def;basis}
\end{align}
\begin{enumerate}
    \item \text{$\gamma_k$ is a simple loop such that any component of $\Sigma\setminus \gamma_k$ is not a disk with 2 punctures},
    \item \text{$\al_\ell$ is a simple arc}, 
    \item \text{any two of $\gamma_k, \al_\ell$ are not isotopic}, and
    \item \text{$\prod_{k}\gamma_k \prod_{\ell}\al_\ell$ is a simple $\cM$-tangle diagram}. 
\end{enumerate}
Only for the four-punctured sphere, we add a condition to the above;
\begin{align}
\text {for two simple arcs $\al_1, \al_2$ satisfying the above condition, 
$n_1$ or $n_2$ is 1.}\label{condi;4-punc}
\end{align}

For $\mathbf{Q}=((Q_n(x)),(R_n(x)))$ a pair of normalized sequences, let $\mathsf{B}_{\mathbf{Q}}$ be the set obtained from (\ref{def;basis}) by replacing $\barT_{m_k}(\gamma_k)$ and $\barT_{n_\ell}(\al_\ell)$ with $Q_{m_k}(\gamma_k)$ and $R_{n_\ell}(\al_\ell)$ respectively. 

\begin{prop}\label{Prop_basis}
For a non-small punctured surface $(\Sigma,\cM)$ and a pair $\mathbf{Q}$ of normalized sequences, the set $\sfB_{\mathbf{Q}}$ is a $\bZ[q^{\pm1/2}]$-basis of $\sSq(\Sigma)$, especially so is $\sfB$.  
\end{prop}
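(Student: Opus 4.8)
The strategy is to show that $\sfB_{\mathbf Q}$ and the known basis $\bar{\mathsf B}$ of \eqref{basis} are related by a \emph{unitriangular} change of coordinates over $\bZ[q^{\pm1/2}]$, with respect to a suitable filtration; since a unitriangular matrix is invertible over any ring, this transfers the basis property from $\bar{\mathsf B}$ to $\sfB_{\mathbf Q}$. First I would fix a basis element $b=\prod_k Q_{m_k}(\gamma_k)\prod_\ell R_{n_\ell}(\al_\ell)\in\sfB_{\mathbf Q}$ and expand it. Each $Q_{m_k}(x)$ is monic of degree $m_k$, so $Q_{m_k}(\gamma_k)=\gamma_k^{m_k}+(\text{lower order in }\gamma_k)$; expanding powers of a simple loop via the Kauffman relation (A) and (B) writes $\gamma_k^{m_k}$ as the multicurve $m_k\gamma_k$ (the diagram with $m_k$ parallel copies) plus a $\bZ[q^{\pm1/2}]$-combination of diagrams with strictly fewer parallel copies of $\gamma_k$. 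Similarly each $R_{n_\ell}(x)$ is monic of degree $n_\ell$, and the crucial point from \eqref{formula:squared}, i.e. $p_{\al}=\al^2-2$, is that $\al_\ell^{n_\ell}$ resolves — using (A), (D) and pushing arcs through punctures — into the simple multi-arc/loop configuration $n_\ell\al_\ell$ together with lower-complexity terms (fewer strands on $\al_\ell$, or loops $p_{\al_\ell}$ which are themselves handled by the Chebyshev-type relation \eqref{rel:Chebyshev} for loops around a two-punctured disk). Condition (4) guarantees the product of the "leading" simple diagrams $\prod_k\gamma_k^{m_k}\prod_\ell\al_\ell^{n_\ell}$ has leading term the simple $\cM$-tangle diagram $\prod_k(m_k\gamma_k)\prod_\ell(n_\ell\al_\ell)\in\bar{\mathsf B}$.

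\textbf{Setting up the filtration.} Next I would make precise the complexity order on $\bar{\mathsf B}$ with respect to which the expansion of $b$ is unitriangular. A natural choice: to a simple diagram $D=\bigcup_j c_j^{(e_j)}$ (with $c_j$ distinct simple loops/arcs and $e_j$ the multiplicity) assign the multiset of pairs $\{(c_j,e_j)\}$, and compare two diagrams by first intersecting their supports and then, on common components, comparing multiplicities, with the dominance/lexicographic refinement that makes "replacing some parallel strands on $c_j$ by a lower-complexity configuration off $c_j$" a strict decrease. Condition (1) (no component of $\Sigma\setminus\gamma_k$ is a twice-punctured disk) is exactly what prevents $\gamma_k$ from equalling some $p_{\al}$, so the loop-variables and the arc-variables do not collide; condition (3) ensures the variables are genuinely distinct; and the special condition \eqref{condi;4-punc} for the four-punctured sphere removes the one coincidence (two disjoint arcs whose squares are isotopic loops) that would otherwise break unitriangularity there. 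With this order, the computation of the previous paragraph shows
\[
b=\Big(\prod_k (m_k\gamma_k)\ \textstyle\bigcup\ \prod_\ell (n_\ell\al_\ell)\Big)+\sum_{D'\prec D}c_{D'}\,D',\qquad c_{D'}\in\bZ[q^{\pm1/2}],
\]
where $D=\prod_k(m_k\gamma_k)\cup\prod_\ell(n_\ell\al_\ell)$.

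\textbf{Bijectivity and conclusion.} Then I would argue that the map $b\mapsto D(b)$ (leading simple diagram) is a bijection from $\sfB_{\mathbf Q}$ onto $\bar{\mathsf B}$: given a simple $\cM$-tangle diagram, group its components by isotopy class, read off the multiplicities, and note that a loop-class $c$ arises from a $\gamma$-variable unless $c=p_{\al}$ for an arc $\al$ present in the diagram — and \eqref{formula:squared} together with the parity bookkeeping (an even power of $\al$ contributes such a loop, an odd power contributes the loop plus one leftover arc) shows the assignment of multiplicities back to $(m_k)$, $(n_\ell)$ is uniquely determined; the four-punctured-sphere caveat \eqref{condi;4-punc} is precisely the normalization needed for uniqueness there. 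Hence the transition matrix from $\bar{\mathsf B}$ to $\sfB_{\mathbf Q}$ is (lower-)unitriangular with $\pm1$'s, no — with $1$'s — on the diagonal, so it is invertible over $\bZ[q^{\pm1/2}]$; therefore $\sfB_{\mathbf Q}$ is a $\bZ[q^{\pm1/2}]$-basis, and in particular $\sfB$ is. The main obstacle I anticipate is not the linear-algebra packaging but verifying that resolving powers of an arc $\al$ — which forces strands through a puncture and creates the loop $p_{\al}$ — really does produce only \emph{strictly lower} terms in the chosen order, uniformly, including the interaction with nearby loops and the degenerate four-punctured-sphere case; getting the complexity order robust enough to make all of (A)–(D), \eqref{rel:Chebyshev} and \eqref{formula:squared} decrease it is the delicate combinatorial heart of the argument.
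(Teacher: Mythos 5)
Your overall strategy --- relating $\sfB_{\mathbf Q}$ to the known basis $\bar{\sfB}$ of \eqref{basis} by a triangular change of coordinates --- is the same one the paper uses, but as written your expansion misidentifies the leading terms, and this is not merely notational. You claim that the leading simple diagram of $b=\prod_k Q_{m_k}(\gamma_k)\prod_\ell R_{n_\ell}(\al_\ell)$ is $\prod_k(m_k\gamma_k)\cup\prod_\ell(n_\ell\al_\ell)$ and that this lies in $\bar{\sfB}$. It does not: for $n_\ell\geq 2$, a diagram containing $n_\ell$ parallel copies of the arc $\al_\ell$ meets the boundary of a small neighborhood of a puncture more than once, so it is excluded from $\bar{\sfB}$ by the definition of a simple $\cM$-tangle diagram. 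The correct leading simple diagram of $\al^n$ is $p_\al^{\lfloor n/2\rfloor}$ (together with one copy of $\al$ if $n$ is odd) --- you do acknowledge this parity bookkeeping in your bijectivity paragraph, but the displayed unitriangularity formula, which is the step that actually carries the proof, is stated for the wrong target and would have to be redone.

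Once that is corrected, your ``delicate combinatorial heart'' largely evaporates, and what remains is essentially the paper's proof. The paper introduces the intermediate normalized pair $\mathbf U=((x^n),(U_n(x)))$ with $U_n(x)=(x^2-2)^{n/2}$ for $n$ even and $U_n(x)=(x^2-2)^{(n-1)/2}x$ for $n$ odd; by \eqref{formula:squared} each $U_n(\al)$ \emph{is} the simple diagram consisting of $\lfloor n/2\rfloor$ parallel copies of $p_\al$ (plus one $\al$ for odd $n$), with no lower-order terms and no crossing resolutions at all, and one checks directly that $\sfB_{\mathbf U}=\bar{\sfB}$ as sets; conditions (1)--(4) and, for the four-punctured sphere, \eqref{condi;4-punc} are exactly what make the reading-off of multiplicities unique, as you correctly observe. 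The passage from $\mathbf U$ to an arbitrary normalized pair $\mathbf Q$ is then pure one-variable algebra: any normalized sequence is a $\bZ[q^{\pm1/2}]$-basis of $\bZ[q^{\pm1/2}][x]$, so $x^n$ and $U_n(x)$ are triangular combinations of the $Q_i$ and $R_i$ and conversely, which gives both spanning and linear independence of $\sfB_{\mathbf Q}$. This factorization replaces the global complexity order on diagrams that you would otherwise have to define and verify against relations (A)--(D) by an exact set-theoretic identity plus elementary polynomial algebra.
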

\begin{proof}
Now we consider a non-small punctured surface $(\Sigma,\cM)$ except for the four-punctured sphere. 
Consider the normalized sequence $(U_n(x))$ defined by 
$$U_n(x)=
\begin{cases}
(x^2-2)^{n/2}&\text{if $n$ is even,}\\
(x^2-2)^{(n-1)/2}\cdot x&\text{if $n$ is odd,} 
\end{cases}$$
and the pair $\mathbf{U}=((x^n), (U_n(x)))$ of normalized sequences. 
With (\ref{formula:squared}), it is easy to see that any element in $\sfB_{\mathbf{U}}$ is a simple $\cM$-tangle diagram, i.e. $\sfB_{\mathbf{U}}\subset \bar{\sfB}$, where $\bar{\sfB}$ was defined as (\ref{basis}).

Let $\beta$ be a simple $\cM$-tangle diagram. 
For a simple loop $\gamma$ which satisfies the condition (1) in (\ref{def;basis}), we obtain parallel copies of $\gamma$ as $\gamma^{n}$. 
For a simple loop $\gamma$ which does not satisfy the condition (1) in (\ref{def;basis}), then there is an arc $\al$ such that $p_\al=U_2(\al)$ is isotopic to $\gamma$. Then, parallel copies of $\gamma$ are obtained as $U_n(\al)$; the parity of $n$ depends on the absence of $\al$ in $\gamma$. 
These imply that any simple $\cM$-tangle diagram is an element of $\sfB_{\mathbf{U}}$. 
Hence, $\sfB_{\mathbf{U}}=\bar{\sfB}$, i.e. $\sfB_{\mathbf{U}}$ is a $\bZ[q^{\pm 1/2}]$-basis of $\sSq(\Sigma)$.

For $\mathbf{Q}=((Q_n(x)), (R_n(x)))$ a pair of normalized sequences, 
since both of $(Q_n(x))$ and $(R_n(x))$ are $\bZ[q^{\pm1/2}]$-bases of $\bZ[q^{\pm1/2}][x]$,
$x^n$ and $U_n(x)$ can be presented as linear sums of $Q_i(x)$ and $R_i(x)$ ($0\leq i\leq n$) respectively. This implies that any simple $\cM$-tangle diagram can be presented as a linear sum of $\sfB_{\mathbf{Q}}$. In particular, each element of $\sfB_{\mathbf{Q}}$ is a linear sum of simple $\cM$-tangle diagrams. Since $\bar{\sfB}$ is a basis, the elements in $\sfB_{\mathbf{Q}}$ are linearly independent. 
Hence, $\sfB_{\mathbf{Q}}$ is also a $\bZ[q^{\pm 1/2}]$-basis of $\sSq(\Sigma)$, especially so is $\sfB$.


One can easily show the case of the four-punctured sphere with the additional condition (\ref{condi;4-punc}). 
\end{proof}

\begin{rmk}
For an even integer $m$, $\barT_m(\al)$ does not contain any arc and this is defined by a Chebyshev polynomial of the first kind by inserting $p_\al$. It is compatible with the basis given in \cite{Thu} except for peripheral loops. \end{rmk}
Then, the following is a positivity conjecture of Roger--Yang skein algebras. 
\begin{conj}\label{Conj_posiRY}
For any non-small punctured surface ($\Sigma,\cM$) with $|\cM|\geq2$, the basis $\sfB$ is a positive basis of the Roger--Yang skein algebra $\sSq(\Sigma)$.
\end{conj}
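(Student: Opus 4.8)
\smallskip
\noindent\emph{A strategy for Conjecture~\ref{Conj_posiRY}.}\
Since $\sfB$ is a $\bZ[q^{\pm1/2}]$-basis by Proposition~\ref{Prop_basis}, the conjecture is equivalent to the statement that for all $b,b'\in\sfB$ the product $b\cdot b'$ expands in $\sfB$ with coefficients in $\bZ_{\geq0}[q^{\pm1/2}]$; since resolving a superposition of two diagrams is a local operation, it is enough to analyse the expansion of a product of a bounded number of Chebyshev-decorated simple curves. Two regimes are immediate. If the curves underlying $b$ and $b'$ are disjoint, pairwise non-isotopic, and jointly satisfy the constraints (1)--(4) of (\ref{def;basis}) (together with (\ref{condi;4-punc}) for the four-punctured sphere), then $b\cdot b'\in\sfB$ with coefficient $1$. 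If the curves coincide up to isotopy, or differ only by passing to the loop $p_\al=\al^2-2$, then the expansion is dictated entirely by the Chebyshev identities (\ref{rel:Chebyshev}), (\ref{rel:Cheby_even}), (\ref{rel;important}) and by (\ref{formula:squared}), all of which have non-negative coefficients. Thus the real content of Conjecture~\ref{Conj_posiRY} lies in the remaining situations: transverse double crossings between curves, and two arc-ends landing at a common puncture.

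The plan is to deal with both uniformly by passing to a $\bZ/2$-branched cover adapted to $\cM$, which should also make precise the expected relation between $\sfB$ and Mandel--Qin's quantum tagged bracelets. Let $\pi\colon\widetilde\Sigma\to\Sigma$ be such a cover, with deck involution $\tau$. The identities $p_\al=\al^2-2$, $\barT_{2n}(\al)=T_n(p_\al)$ and $\barT_{2n+1}(\al)=(S_n-S_{n-1})(p_\al)\cdot\al$ coming from (\ref{formula:squared}), (\ref{rel:Cheby_even}), (\ref{rel;important}) are exactly the relations one expects at a $\bZ/2$-orbifold point: a simple arc $\al$ of $\Sigma$ between two punctures should lift to a simple loop $\widetilde\al$ disjoint from its image $\tau\widetilde\al$, the loop $p_\al$ to the boundary of a regular neighbourhood of $\widetilde\al\cup\tau\widetilde\al$, a simple loop of $\Sigma$ to a $\tau$-orbit of loops, and --- crucially --- both a transverse crossing downstairs and a pair of arc-ends at a branch point downstairs to an ordinary transverse crossing upstairs. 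Granting a dictionary of this kind, identifying $\sSq(\Sigma)$ with a suitable reduction of the $\tau$-invariant part of the ordinary Kauffman bracket skein algebra of $\widetilde\Sigma\setminus\pi^{-1}(\cM)$ and carrying the elements of $\sfB$ to $\tau$-symmetrized bracelet elements upstairs, Conjecture~\ref{Conj_posiRY} follows from the positivity of the bracelets basis proved by Mandel--Qin~\cite{MQ}: a $\bZ_{\geq0}[q^{\pm1/2}]$-expansion in bracelets upstairs restricts to a $\bZ_{\geq0}[q^{\pm1/2}]$-expansion in $\sfB$ downstairs, which would also exhibit $\sfB$ as the geometric incarnation of the tagged bracelets, the tagging being realized by the Roger--Yang relation (\ref{formula:squared}).

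The principal obstacle is constructing and verifying this dictionary: the Kauffman bracket skein theory of surfaces with $\bZ/2$-orbifold points, and its precise relation to $\sSq$, is not available in the literature; the cover itself needs a standard correction when $|\cM|$ is odd; and matching the constraints (1)--(4) and (\ref{condi;4-punc}) with the normalization of the bracelets basis on $\widetilde\Sigma$ --- in particular reconciling the resolved-arc elements $\barT_{2n+1}(\al)$ with Mandel--Qin's tagged bracelets, and checking that $\widetilde\Sigma\setminus\pi^{-1}(\cM)$ lies in the class of surfaces to which \cite{MQ} applies --- requires genuine work. Failing a uniform argument, the non-small surfaces of smallest complexity not covered by Theorem~\ref{Thm_lower}, namely the four- and five-punctured spheres and the twice-punctured torus, should be within reach of a direct diagrammatic computation in the style of \cite{Bou} and \cite{IK23}: resolve every crossing by relations (A) and (D), simplify using (B), (C) and (\ref{formula:squared}), and re-expand in $\sfB$ via the Chebyshev identities of Section~\ref{Sec_poly}. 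For a conceptual proof in general, one could instead build a quantum cluster or scattering-diagram model of $\sSq(\Sigma)$ directly along the lines of \cite{MQ}, so that $\sfB$ becomes a theta-type basis and positivity descends from positivity of the scattering diagram.
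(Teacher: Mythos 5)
The statement you are addressing is a \emph{conjecture}: the paper offers no proof of Conjecture~\ref{Conj_posiRY}, only partial evidence (the same-arc Chebyshev computations at the start of Section~5, the local formula of Theorem~\ref{Thm_formula}, and the $\Sigma_{0,3}$ case extracted from \cite{BPKW}). Your text is, accordingly and honestly, a strategy rather than a proof, so there is no argument here to certify. Two of its steps deserve concrete criticism. First, the opening reduction --- ``since resolving a superposition of two diagrams is a local operation, it is enough to analyse a bounded number of Chebyshev-decorated simple curves'' --- is not sound as a logical reduction. Applying relations (A) and (D) at each crossing is local, but the resulting diagrams are generally non-simple globally and must then be re-expanded in the basis $\sfB$; the positivity of that global re-expansion is precisely the content of the conjecture and does not follow from positivity of each local resolution. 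This is why the paper presents Theorem~\ref{Thm_formula} (which is exactly a computation of the local type you describe, on the disk $(\bD,\cM')$) only as ``partial evidence'' and not as a proof. Your triage of the easy regimes (disjoint compatible curves; powers of a single arc and its peripheral loop $p_\al$ via (\ref{rel:Chebyshev})--(\ref{rel;important}) and (\ref{formula:squared})) is correct and consistent with the paper's Section~5.

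Second, the $\bZ/2$-branched-cover dictionary that carries all the weight of your argument is entirely unconstructed, and the paper itself flags the specific point where it would have to be built: the basis $\sfB$ differs from Mandel--Qin's quantum tagged bracelets exactly because relation (D) \emph{resolves} pairs of arc-ends at a common puncture, whereas cluster monomials retain them, and the introduction states explicitly that no precise relation between the two bases is known. So positivity cannot currently be imported from \cite{MQ}; establishing the identification of $\sSq(\Sigma)$ with (a reduction of) the $\tau$-invariant part of a skein algebra upstairs, compatibly with both bases, is an open problem of at least the same order as the conjecture itself. Your closing suggestions --- direct diagrammatic verification for the four- and five-punctured spheres and the twice-punctured torus in the style of \cite{Bou} and \cite{IK23}, or a theta-basis model of $\sSq(\Sigma)$ --- are reasonable next steps, but as written the proposal leaves the conjecture exactly as open as the paper does.
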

\begin{rmk}\label{Rmk_conj}
\begin{enumerate}
    \item For any non-planar once-punctured surface, its Roger--Yang skein algebra has the basis ${\bf B}$ without arcs. More precisely, the basis corresponds with the bracelets basis of its (Kauffman bracket) skein algebra in \cite{Thu} except peripheral loops. The positivity of skein algebras and Relation (C) ensure the positivity of ${\bf B}$. 
    \item One can formulate a positivity conjecture of original Roger--Yang skein algebras $\sS_q^{{\rm RY}}(\Sigma)$ by replacing $x^2-2$ with $p r x^2-2$ in (\ref{rel;important}) $(p,r\in \cM)$ and multiplying a power of $p r$ to have a normalized sequence. Moreover, the obtained basis in the same manner as (\ref{def;basis}) is $\sfB$ and the positivity conjectures in both of $\sS_q^{{\rm RY}}(\Sigma)$ and $\sSq(\Sigma)$ are equivalent since $\{p^{\pm1}\mid p\in \cM\}$ does not affect the positivity. 
\end{enumerate}
\end{rmk}

\paragraph{{\bf Lower bound of positive bases}}
For a non-small punctured surface with at least 3 punctures, take an arc $\al$ connecting $p,p'\in \cM$ ($p\neq p'$). 
Fix an arc $\beta$ connecting $p,p''\in \cM$ ($p''\neq p,p'$) such that $\beta$ intersects with $p_{\al}$ just once, Figure \ref{Fig_local}. 
\begin{prop}[{\cite[Proposition 2.3 (6)]{Le15}}]\label{Prop_Le}
Let $\tau$ be the left-handed Dehn twist along $p_\al$. Then, for any $n\geq 1$
\begin{align*}
\beta\cdot T_n(p_\al)=q^{n}\tau^{n}(\beta)+q^{-n}\tau^{-n}(\beta).
\end{align*}
\end{prop}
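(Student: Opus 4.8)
The plan is to prove this identity by induction on $n$, using the Chebyshev recurrence $T_{n+1}(x)=xT_n(x)-T_{n-1}(x)$ together with the skein-theoretic meaning of multiplying by $p_\al$. First I would establish the base cases $n=0$ and $n=1$. For $n=1$, the key geometric input is that multiplying $\beta$ by $p_\al$ (a small loop encircling $\al$, which crosses $\beta$ exactly twice) and resolving the two crossings via relation (A) produces, with coefficients $q^{\pm 2}$ from the two smoothings of like sign, the curves $\tau(\beta)$ and $\tau^{-1}(\beta)$ together with a term coming from the mixed resolution; this mixed term should simplify using relation (C) (the puncture-loop relation, contributing $q+q^{-1}$) or cancel appropriately so that one is left with $\beta\cdot p_\al = q\,\tau(\beta)+q^{-1}\tau^{-1}(\beta)$ plus possibly a correction by $\beta$ itself — but since $T_1(p_\al)=p_\al$ and $\barT_0$ conventions are in play, I would be careful to track whether the statement uses $p_\al$ or $T_1(p_\al)$; as written $T_1(p_\al)=p_\al$, so the $n=1$ case is exactly this local computation. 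For $n=0$, $T_0(p_\al)=2$ and $\tau^0=\mathrm{id}$, so both sides equal $2\beta$.

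For the inductive step, assuming the formula for $n-1$ and $n$, I would compute
\[
\beta\cdot T_{n+1}(p_\al) = \beta\cdot\bigl(p_\al\, T_n(p_\al)-T_{n-1}(p_\al)\bigr) = \bigl(\beta\cdot p_\al\bigr)\cdot T_n(p_\al) - \beta\cdot T_{n-1}(p_\al).
\]
Here I would need that $p_\al$ is central enough — or rather that $T_n(p_\al)$ commutes with $\beta\cdot p_\al$ in the relevant sense — but more carefully, since $p_\al$ is a loop disjoint from $\tau^{\pm k}(\beta)$ for all $k$ (the twist is supported in a neighborhood of $p_\al$), multiplication by $T_n(p_\al)$ acts on each $\tau^{\pm k}(\beta)$ just by superimposing parallel copies, and $\tau^{m}(\gamma)\cdot T_n(p_\al)$ can be re-expressed using the $n=1$ relation applied to $\tau^m(\beta)$ in place of $\beta$. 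Substituting the $n=1$ result $\beta\cdot p_\al = q\tau(\beta)+q^{-1}\tau^{-1}(\beta)$ and the inductive hypotheses, the expression becomes
\[
\bigl(q\tau(\beta)+q^{-1}\tau^{-1}(\beta)\bigr)\cdot T_n(p_\al) - \bigl(q^{n-1}\tau^{n-1}(\beta)+q^{-(n-1)}\tau^{-(n-1)}(\beta)\bigr),
\]
and then applying the twist-equivariant version of Proposition statement to $\tau^{\pm1}(\beta)\cdot T_n(p_{\tau^{\pm1}(\al)})=\tau^{\pm1}(\beta)\cdot T_n(p_\al)$ (noting $\tau(p_\al)=p_\al$) yields $q\bigl(q^n\tau^{n+1}(\beta)+q^{-n}\tau^{-n+1}(\beta)\bigr)+q^{-1}\bigl(q^n\tau^{n-1}(\beta)+q^{-n}\tau^{-n-1}(\beta)\bigr)$ minus the $T_{n-1}$ term; the cross terms $q^{1-n}\tau^{-n+1}(\beta)$ and $q^{n-1}\tau^{n-1}(\beta)$ cancel against the subtracted $T_{n-1}(p_\al)$ contribution, leaving exactly $q^{n+1}\tau^{n+1}(\beta)+q^{-(n+1)}\tau^{-(n+1)}(\beta)$.

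The main obstacle I anticipate is the $n=1$ base case: carefully resolving the two crossings of $p_\al$ with $\beta$, identifying which smoothing produces $\tau(\beta)$ versus $\tau^{-1}(\beta)$ versus the "bad" mixed term, and checking that the mixed term either vanishes or is absorbed — this requires a precise local picture (Figure \ref{Fig_local}) and correct bookkeeping of the $q^{\pm1}$ weights from relation (A) and the $(q+q^{-1})$ from relation (C), along with the orientation/handedness convention that makes the twist \emph{left-handed}. A secondary subtlety is justifying that $T_n(p_\al)$ interacts with $\tau^{\pm1}(\beta)$ exactly as with $\beta$, i.e.\ the naturality of the whole identity under the mapping class $\tau$, which is immediate once one observes $\tau$ fixes $p_\al$ setwise and acts by an algebra automorphism of $\sSq(\Sigma)$. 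Everything else is routine algebra driven by the Chebyshev recurrence.
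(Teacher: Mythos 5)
The paper gives no proof of this proposition; it is quoted verbatim from \cite[Proposition 2.3 (6)]{Le15}, and your inductive scheme (base case by resolving crossings, induction via the Chebyshev recurrence and $\tau$-equivariance) is exactly the standard argument behind that citation. Your inductive step is correct as written: since $T_{n+1}(p_\al)=p_\al T_n(p_\al)-T_{n-1}(p_\al)$ holds as an identity among polynomials in the single element $p_\al$, associativity alone gives $\beta\cdot T_{n+1}(p_\al)=(\beta\cdot p_\al)\cdot T_n(p_\al)-\beta\cdot T_{n-1}(p_\al)$, so no centrality is needed; and the substitution $\tau^{\pm1}(\beta)\cdot T_n(p_\al)=\tau^{\pm1}\bigl(\beta\cdot T_n(p_\al)\bigr)$ is justified because $\tau$ acts as an algebra automorphism fixing $p_\al$. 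The telescoping you describe then closes the induction.

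The one genuine flaw is in your base case, which you yourself flag as the main obstacle. You assert that $p_\al$ crosses $\beta$ exactly twice and anticipate a ``mixed resolution'' term to be disposed of with relation (C). In the setup of the proposition, $\beta$ has one endpoint at the puncture $p$ lying \emph{inside} the disk bounded by $p_\al$ and the other endpoint at $p''$ outside it, and the standing hypothesis is that $\beta$ meets $p_\al$ exactly once (see Figure \ref{Fig_local}). Hence the product diagram $\beta\cdot p_\al$ has a single interior crossing; one application of relation (A) produces exactly two terms, the two smoothings being the arcs that traverse the annular neighborhood of $p_\al$ winding once in opposite senses, i.e.\ $\tau(\beta)$ and $\tau^{-1}(\beta)$, with coefficients $q$ and $q^{-1}$. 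There is no mixed term, no appeal to relation (C), and no correction by $\beta$ itself. (Were the intersection number two --- e.g.\ if both endpoints of $\beta$ lay outside $p_\al$ --- the formula would take a different shape, which is a separate item of \cite[Proposition 2.3]{Le15}.) Once the base case is restated for the correct local picture, your argument is complete, up to fixing the handedness convention so that the $q$-smoothing is the left-handed twist.
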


Let $\overline{\al},\underline{\al}$ are $\cM$-tangle diagrams locally depicted in Figure \ref{Fig_local}. 
\begin{figure}[ht]
    \centering
    \includegraphics[width=240pt]{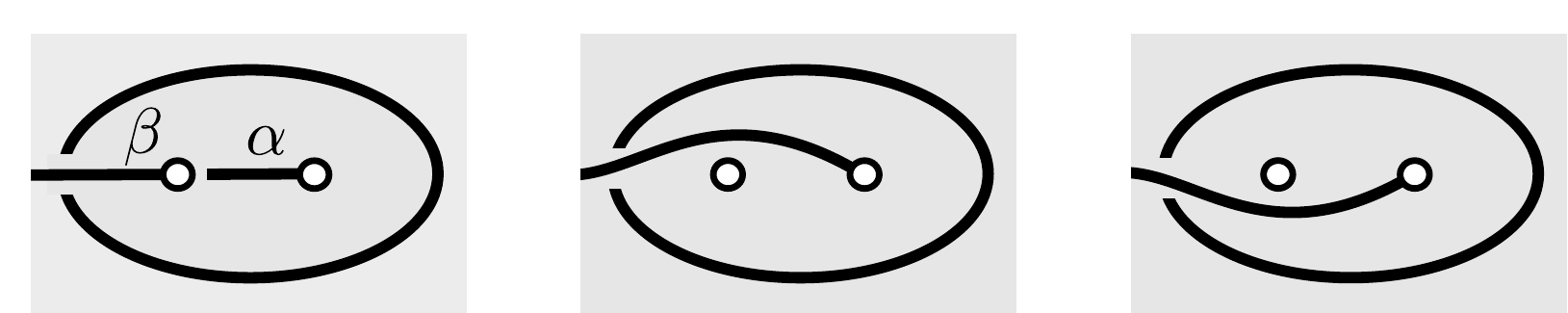}
    \caption{In each local picture, the punctures are $p'', p, p'$ from left to right. Left: $\beta\cdot p_\al\cdot \al$, Middle: $\overline{\al}\cdot p_\al$, Right: $\underline{\al}\cdot p_\al$}\label{Fig_local}
\end{figure}
\begin{prop}\label{Prop_twist}
Let $\tau$ be the left-handed Dehn twist along $p_\al$. Then, for any $n\geq 1$
\begin{align*}
\beta\cdot(S_n(p_\al)-S_{n-1}(p_\al))\al=q^{(2n+1)/2}\tau^{n}(\overline{\al})+q^{-(2n+1)/2}\tau^{-n}(\underline{\al}).
\end{align*}
\end{prop}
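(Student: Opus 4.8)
The plan is to reduce Proposition~\ref{Prop_twist} to Proposition~\ref{Prop_Le} by exploiting the skein relation (\ref{formula:squared}), $p_\al=\al^2-2$, together with the algebraic identity (\ref{rel:Cheby_TS}) relating the Chebyshev differences $S_n-S_{n-1}$ to products $T_n\cdot(S_m-S_{m-1})$ and the square/even-index identity (\ref{rel:Cheby_even}), (\ref{rel;important}). The key observation is that $\big(S_n(p_\al)-S_{n-1}(p_\al)\big)\al$ should be expressible in terms of $T_{2n+1}(\al)$, since by (\ref{rel;important}) we have $\barT_{2n+1}(\al)=\big(S_n-S_{n-1}\big)(\al^2-2)\cdot\al=\big(S_n(p_\al)-S_{n-1}(p_\al)\big)\al$. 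So the left-hand side of the proposition is exactly $\beta\cdot T_{2n+1}(\al)$, viewed as an element of $\sSq(\Sigma)$.

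First I would rewrite the statement: using (\ref{rel;important}) and (\ref{formula:squared}), the left-hand side equals $\beta\cdot T_{2n+1}(\al)$. The next step is to find the right analogue of Proposition~\ref{Prop_Le} for an \emph{arc} $\al$ rather than the loop $p_\al$. Geometrically, $\al$ is a ``half'' of the boundary loop $p_\al$ of its regular neighborhood, and $T_{2n+1}(\al)$ is, up to the contributions of the Chebyshev recursion, $2n+1$ parallel copies of $\al$ packaged to behave like $T_{2n+1}$ under multiplication. Multiplying by $\beta$, which crosses $p_\al$ exactly once (hence crosses the band of parallel $\al$'s $2n+1$ times), and resolving all crossings via relation (D), one expects the result to collapse — as in Le's argument — to the two ``extremal'' terms, namely those where $\beta$ is pushed across the band entirely to one side or the other. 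These two extremal terms are precisely $\tau^{n}(\overline{\al})$ and $\tau^{-n}(\underline{\al})$: dragging $\beta$ across $n$ full twists' worth of the band yields $\tau^{\pm n}$ applied to $\beta$ reconnected with one leftover copy of $\al$, which is what the decorated diagrams $\overline{\al},\underline{\al}$ in Figure~\ref{Fig_local} record. The power of $q^{1/2}$ is bookkept by counting crossings: each of the $2n+1$ strands contributes a $q^{\pm1/2}$ from (D), and the all-over resolution gives $q^{(2n+1)/2}$ while the all-under gives $q^{-(2n+1)/2}$, matching the claimed coefficients; the intermediate resolutions must cancel in pairs.

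Concretely, the cleanest route is probably the following hybrid. Apply Proposition~\ref{Prop_Le} with the loop $p_\al$ and the arc $\beta$ (which meets $p_\al$ once): $\beta\cdot T_n(p_\al)=q^{n}\tau^{n}(\beta)+q^{-n}\tau^{-n}(\beta)$. Then multiply both sides on the right by $\al$ and analyze $\tau^{\pm n}(\beta)\cdot\al$. Since $\al$ is disjoint from $p_\al$ except at the puncture $p$, and $\tau^{\pm n}$ is supported in the annulus around $p_\al$, the product $\tau^{n}(\beta)\cdot\al$ has all its crossings in a small region; resolving via (D) and using that $\al$ enters from the correct side produces a single term $q^{\pm 1/2}\tau^{n}(\overline{\al})$ (for the over-resolution) with the under-resolution term either vanishing by a trivial-loop/puncture relation (B) or (C) or being reabsorbed. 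Combining, $\beta\cdot T_n(p_\al)\cdot\al = q^{(2n+1)/2}\tau^{n}(\overline{\al})+q^{-(2n+1)/2}\tau^{-n}(\underline{\al})$ after matching the over/under choices to the sign of the twist, and then one recognizes $T_n(p_\al)\cdot\al$ as $\big(S_n(p_\al)-S_{n-1}(p_\al)\big)\al$ up to lower-order corrections — here I would use (\ref{rel:Cheby_TS}) with $m=0$ (noting $S_0-S_{-1}=1$), which gives $T_n(p_\al)=\big(S_n(p_\al)-S_{n-1}(p_\al)\big)+\big(S_{n-1}(p_\al)-S_{n-2}(p_\al)\big)$, and an inductive argument peeling off the extremal twist term at each stage.

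The main obstacle I anticipate is the careful diagrammatic verification that the two extremal resolutions really are $\tau^{n}(\overline{\al})$ and $\tau^{-n}(\underline{\al})$ with exactly the stated half-integer powers of $q$, and that \emph{all} intermediate resolutions cancel. This is the same phenomenon that makes Le's Proposition~\ref{Prop_Le} work, but the presence of the puncture $p$ shared by $\al$ and $\beta$, together with the need to invoke (D) rather than (A) at the relevant crossings, means the cancellation is not literally a special case of the loop statement and must be redone. A safe fallback is a direct induction on $n$ using the Chebyshev recurrence $S_{n+1}-S_n = (p_\al)(S_n-S_{n-1}) - (S_n - S_{n-1}) - (S_{n-1}-S_{n-2})$ wait — more precisely $S_{n+1}=p_\al S_n - S_{n-1}$, hence $\big(S_{n+1}-S_n\big) = p_\al\big(S_n-S_{n-1}\big) - \big(S_n - S_{n-1}\big) + \big(S_{n-1}-S_{n-2}\big)$ is not clean; instead I would use the three-term relation directly on the sequence $S_n - S_{n-1}$ and track how multiplying by $p_\al=\al^2-2$ interacts with $\beta$ via (D), reducing the $n$ case to the $n-1$ and $n-2$ cases and checking the base cases $n=1,2$ by hand from Figure~\ref{Fig_local}. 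Either way, the algebraic identities (\ref{rel:Cheby_TS}) and (\ref{rel;important}) do the heavy lifting on the polynomial side, and the geometric input is entirely local near $p_\al$.
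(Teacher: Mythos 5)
Your overall strategy matches the paper's: check small $n$ by hand, resolve the single crossing of $\beta$ with $\al$ at the puncture $p$ via relation (D) to get $\beta\cdot\al=q^{1/2}\overline{\al}+q^{-1/2}\underline{\al}$, feed the result into Proposition~\ref{Prop_Le}, and induct using a Chebyshev identity. The paper's inductive step uses $S_{k+1}(p_\al)-S_k(p_\al)=T_{k+1}(p_\al)-\bigl(S_k(p_\al)-S_{k-1}(p_\al)\bigr)$, so that
\[
\beta\cdot\bigl(S_{k+1}(p_\al)-S_k(p_\al)\bigr)\al=\bigl(q^{1/2}\overline{\al}+q^{-1/2}\underline{\al}\bigr)T_{k+1}(p_\al)-q^{(2k+1)/2}\tau^{k}(\overline{\al})-q^{-(2k+1)/2}\tau^{-k}(\underline{\al}),
\]
and then applies Proposition~\ref{Prop_Le} to $\overline{\al}$ and $\underline{\al}$ (each meets $p_\al$ exactly once), producing four terms.

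There is, however, a genuine gap at exactly the point you flag as ``the main obstacle'': you never identify the mechanism by which the non-extremal terms disappear. Your suggestion that in $\tau^{\pm n}(\beta)\cdot\al$ one of the two resolutions from (D) ``vanishes by a trivial-loop/puncture relation (B) or (C) or is reabsorbed'' is false --- neither resolution produces a trivial or puncture-bounding loop, and both terms survive. The actual mechanism is the geometric identity $\overline{\al}=\tau(\underline{\al})$, which the paper invokes explicitly. Of the four terms produced by Proposition~\ref{Prop_Le}, the two inner ones are
$q^{-1/2}q^{k+1}\tau^{k+1}(\underline{\al})=q^{(2k+1)/2}\tau^{k}(\overline{\al})$ and
$q^{1/2}q^{-(k+1)}\tau^{-(k+1)}(\overline{\al})=q^{-(2k+1)/2}\tau^{-k}(\underline{\al})$,
and these cancel precisely against the subtracted $(S_k-S_{k-1})$ term supplied by the induction hypothesis, leaving only the two extremal terms $q^{(2k+3)/2}\tau^{k+1}(\overline{\al})+q^{-(2k+3)/2}\tau^{-(k+1)}(\underline{\al})$. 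Without $\overline{\al}=\tau(\underline{\al})$ your assertion that ``all intermediate resolutions cancel'' is unproven, and it is the entire content of the inductive step; your fallback induction on the three-term recurrence would hit the same missing identity.
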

\begin{proof}
It is easy to verify the case of $n=1$. 

Suppose that the case of $n=k\geq1$ holds. 
From $S_{k+1}(p_\al)-S_{k}(p_\al)=T_{k+1}(p_\al)-(S_{k}(p_\al)-S_{k-1}(p_\al))$, we have 
\begin{eqnarray*}
\beta\cdot(S_{k+1}(p_\al)-S_{k}(p_\al))\al&=&\beta\cdot(T_{k+1}(p_\al)-(S_{k}(p_\al)-S_{k-1}(p_\al)))\al\\
&=&(q^{1/2}\overline{\al}+q^{-1/2}\underline{\al})T_{k+1}(p_\al)-q^{(2k+1)/2}\tau^{k}(\overline{\al})-q^{-(2k+1)/2}\tau^{-k}(\underline{\al})\\
&=&q^{(2k+3)/2}\tau^{k+1}(\overline{\al})+q^{-(2n+3)/2}\tau^{-k-1}(\underline{\al}),
\end{eqnarray*}
where the last equality follows from Proposition \ref{Prop_Le} and $\overline{\al}=\tau(\underline{\al})$.
\end{proof}

Hence, with (\ref{rel:Cheby_even}), (\ref{rel;important}) and (\ref{formula:squared}), we have 
\begin{align}
\beta\cdot \barT_n(\al)=
\begin{cases}
q^{n/2}\tau^{n/2}(\beta)+q^{-n/2}\tau^{-n/2}(\beta)& \text{if $n$ is even}\\
q^{n/2}\tau^{(n-1)/2}(\overline{\al})+q^{-n/2}\tau^{-(n-1)/2}(\underline{\al})& \text{if $n$ is odd}
\end{cases}.\label{eq;once}
\end{align}

A $\bZ[q^{\pm1/2}]$-module basis of $\sSq(\Sigma)$ is \emph{positive} if the structure constants are in $\bZ_{\geq 0}[q^{\pm1/2}]$.
For a pair of normalized sequences $\mathbf{Q}$ 
is \emph{positive} for an oriented punctured surface $\Sigma$ over $\bZ[q^{\pm1/2}]$ if $\mathsf{B}_{\mathbf{Q}}$ is a positive basis of $\sSq(\Sigma)$. 

With similar definitions given in \cite{Le18} and \cite{LTY} for (Kauffman bracket) skein algebras, we have the following. 
\begin{thm}[{\cite[Theorem 2.4]{LTY}}, see also {\cite[Theorem 3.2]{Le18}}]\label{Thm_Le}
Suppose a normalized sequence $(Q_n(x))$ is positive for an oriented surface which is planar with at least 4 punctures or non-planar. 
Then, for its skein algebra, $(Q_n(x))\geq (\barT_n(x))$. 
Besides, $Q_1(x)=x$. 
\end{thm}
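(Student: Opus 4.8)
The plan is to fix one well-chosen test configuration of curves and to read off the Chebyshev coefficients of every $Q_n$ from the expansion, in the positive threading basis $\sfB_{(Q)}$ of the skein algebra determined by $(Q_n)$, of a single product of two basis elements. Since $(\barT_j)$ is a $\bZ[q^{\pm1/2}]$-basis of $\bZ[q^{\pm1/2}][x]$ and $Q_n,\barT_n$ are both monic of degree $n$, I would write $Q_n(x)=\sum_{j=0}^{n}b_{n,j}\,\barT_j(x)$ with $b_{n,n}=1$; the theorem then amounts to $b_{n,j}\in\bZ_{\geq0}[q^{\pm1/2}]$ for all $j$ together with $Q_1(x)=x$. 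The geometric input I need is the following: under the hypothesis that $\Sigma$ is planar with at least $4$ punctures or is non-planar, one can choose a non-peripheral simple loop $\gamma$ whose left-handed Dehn twist $\tau$ has infinite order, together with a simple $\beta$ (a loop in the non-planar case, an arc meeting $\gamma$ once in the planar case, since on a planar surface two loops always meet an even number of times) such that $\beta$ and $\gamma$ have geometric intersection number $1$ and the diagrams $\tau^{k}(\beta)$, $k\in\bZ$, are pairwise non-isotopic simple diagrams, hence pairwise distinct elements of the standard basis $\bar{\sfB}$. A single such pair $(\gamma,\beta)$ suffices to constrain all the $Q_n$, which are the same polynomials for every loop.

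I would treat $n=1$ first, since establishing $Q_1(x)=x$ makes single curves into genuine basis elements. Write $Q_1(x)=x+c$ with $c\in\bZ[q^{\pm1/2}]$, so that $Q_1(\delta)=\delta+c\,\emptyset$ for every simple loop $\delta$, where $\emptyset$ denotes the empty diagram. Resolving the single crossing of $\gamma\cdot\beta$ by the product-to-sum identity (the $n=1$ case of Proposition \ref{Prop_Le}) and rewriting each single curve via $\delta=Q_1(\delta)-c\,\emptyset$, I would obtain
\[
Q_1(\gamma)\cdot Q_1(\beta)=q\,Q_1(\tau\beta)+q^{-1}Q_1(\tau^{-1}\beta)+c\,Q_1(\gamma)+c\,Q_1(\beta)-\big(c(q+q^{-1})+c^{2}\big)\,\emptyset .
\]
Because $\beta,\tau\beta,\tau^{-1}\beta,\gamma$ are pairwise non-isotopic, these terms are distinct basis elements and the coefficients can be read off without cancellation. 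Positivity applied to the coefficient of $Q_1(\gamma)$ forces $c\in\bZ_{\geq0}[q^{\pm1/2}]$, while positivity applied to the coefficient of $\emptyset$ forces $c(q+q^{-1})+c^{2}\in\bZ_{\leq0}[q^{\pm1/2}]$; the only element of $\bZ_{\geq0}[q^{\pm1/2}]$ satisfying the latter is $c=0$, giving $Q_1(x)=x$.

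For general $n$, now that $Q_1(\beta)=\beta$ is a basis element, I would expand the product $Q_n(\gamma)\cdot\beta$ using $\beta\cdot\barT_0(\gamma)=\beta$ and the Dehn twist identity $\beta\cdot\barT_j(\gamma)=q^{j}\tau^{j}(\beta)+q^{-j}\tau^{-j}(\beta)$ for $j\geq1$ (Proposition \ref{Prop_Le}), obtaining
\[
Q_n(\gamma)\cdot\beta=b_{n,0}\,\beta+\sum_{j=1}^{n}b_{n,j}\big(q^{j}\tau^{j}(\beta)+q^{-j}\tau^{-j}(\beta)\big).
\]
Since the $\tau^{k}(\beta)$ are distinct simple curves and no two indices collapse, the coefficient of the basis element $\tau^{k}(\beta)$ is exactly $b_{n,|k|}\,q^{k}$ with no cross-contribution. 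Positivity of $\sfB_{(Q)}$ then gives $b_{n,j}\,q^{j}\in\bZ_{\geq0}[q^{\pm1/2}]$, and dividing by the monomial $q^{j}$ yields $b_{n,j}\in\bZ_{\geq0}[q^{\pm1/2}]$ for every $j$; the $j=0$ term handles $b_{n,0}$ via the coefficient of $\beta$. This is precisely $(Q_n)\geq(\barT_n)$.

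The step I expect to be the main obstacle is the geometric/combinatorial input rather than the algebra: guaranteeing the existence of a pair $(\gamma,\beta)$ with geometric intersection number one whose twist-orbit $\{\tau^{k}(\beta)\}_{k\in\bZ}$ is infinite and injective, and verifying that every expansion above consists of genuinely distinct basis elements with no hidden coincidences. This is exactly where the hypothesis enters and where small spheres are excluded: on the sphere with at most three punctures there is no non-peripheral loop supporting an infinite-order twist, so no such orbit exists, and the even-intersection obstruction on planar surfaces is what forces $\beta$ to be an arc rather than a loop in the planar case.
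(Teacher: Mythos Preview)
The paper does not prove Theorem~\ref{Thm_Le} itself; it is quoted from \cite{Le18} and \cite{LTY} and then invoked in the proof of Theorem~\ref{Thm_lower}. Your reconstruction in the non-planar case is correct and is exactly L\^e's argument in \cite{Le18}: pick a non-separating simple loop $\gamma$ and a simple loop $\beta$ meeting it once, expand $Q_1(\beta)\cdot Q_n(\gamma)$ via Proposition~\ref{Prop_Le}, and read off the Chebyshev coefficients $b_{n,j}$ as the coefficients of the pairwise non-isotopic simple diagrams $\tau^{j}(\beta)$. The $n=1$ step forcing the constant $c$ to vanish is also correct, and this whole scheme is precisely what the present paper adapts (with an arc in place of the transverse loop) in its proof of Theorem~\ref{Thm_lower}.

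The planar case, however, has a real gap. You correctly note that on a sphere with punctures any two simple loops have even geometric intersection number, so no pair of loops meeting once exists; you then propose taking $\beta$ to be an arc. But Theorem~\ref{Thm_Le} is about the ordinary Kauffman bracket skein algebra, whose elements are isotopy classes of framed links only---arcs are not present there (they belong to the Roger--Yang algebra $\sSq(\Sigma)$). Hence your test product $Q_n(\gamma)\cdot\beta$ with $\beta$ an arc is not a product of two elements of the basis $\sfB_{(Q)}$, and positivity of $\sfB_{(Q)}$ gives no constraint on its expansion. The extension to planar surfaces in \cite{LTY} instead uses two simple loops with geometric intersection number~$2$, together with a more elaborate product formula for $\beta\cdot T_n(\gamma)$ in that situation; the Chebyshev coefficients can still be isolated from the resulting expansion, but the argument is genuinely different from the once-intersecting one you sketch. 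To cover planar surfaces you would need to replace the arc by such a pair of loops and redo the bookkeeping accordingly.
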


The following reformulates Theorem \ref{Intro_thm}. 
\begin{thm}\label{Thm_lower}
Suppose a pair $((Q_n(x)),(R_n(x)))$ of normalized sequences is positive for an oriented punctured surface $\Sigma$ which is planar with at least 4 punctures or non-planar with at least 3 punctures. 
Then, $(Q_n(x))\geq (\barT_n(x))$ and $(R_n(x))\geq (\barT_n(x))$.  
Besides, $Q_1(x)=R_1(x)=x$. 
\end{thm}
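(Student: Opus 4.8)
The plan is to reduce the statement to Theorem \ref{Thm_Le} by showing that positivity of the pair $((Q_n(x)),(R_n(x)))$ for $\sSq(\Sigma)$ forces positivity of the single sequence $(Q_n(x))$ for the ordinary (Kauffman bracket) skein algebra of $\Sigma$, and separately forces $(R_n(x)) \geq (\barT_n(x))$ by a local argument with the Dehn twist formula \eqref{eq;once}. For the first part I would observe that the sub-$\bZ[q^{\pm1/2}]$-module of $\sSq(\Sigma)$ spanned by simple $\cM$-tangle diagrams \emph{without arcs} is a subalgebra isomorphic to the skein algebra $\sS_q(\Sigma)$ (products of loop diagrams never create arcs under relations (A)--(C)), and that the elements $\prod_k Q_{m_k}(\gamma_k)$ of $\sfB_{\mathbf{Q}}$ involving only loops $\gamma_k$ form a $\bZ[q^{\pm1/2}]$-basis of this subalgebra. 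Since the structure constants of $\sfB_{\mathbf{Q}}$ are assumed in $\bZ_{\geq0}[q^{\pm1/2}]$, and a product of such loop-only basis elements expands only into loop-only basis elements (the arc-free subalgebra is closed), the restricted basis $\{\prod_k Q_{m_k}(\gamma_k)\}$ is a positive basis of $\sS_q(\Sigma)$ coming from the normalized sequence $(Q_n(x))$. Theorem \ref{Thm_Le} then applies directly under the hypothesis on $\Sigma$ (planar with $\geq 4$ punctures, or non-planar) and yields $(Q_n(x)) \geq (\barT_n(x))$ and $Q_1(x)=x$.

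For the second part, dealing with $(R_n(x))$, I would argue locally around a single arc $\al$ as set up before Proposition \ref{Prop_twist}. Fix the arc $\al$ connecting $p,p'$ and the auxiliary arc $\beta$ connecting $p,p''$ meeting $p_\al$ once; such $\beta$ exists precisely because $\Sigma$ has a third puncture, which is where the hypothesis ``at least $3$ punctures'' (or $\geq 4$ in the planar case, to avoid the four-punctured sphere subtlety and the sphere exclusions) enters. Writing $R_n(\al) = \sum_{i=0}^n c_{n,i}\,\barT_i(\al)$ in the basis $(\barT_i(x))$ of $\bZ[q^{\pm1/2}][x]$ (note $\barT_0,\dots,\barT_n$ is a $\bZ[q^{\pm1/2}]$-basis of polynomials of degree $\leq n$ since each is monic of its degree), I would multiply by $\beta$ and apply \eqref{eq;once} termwise: the product $\beta\cdot R_n(\al)$ becomes $\sum_i c_{n,i}\big(q^{i/2}\tau^{\lceil\cdot\rceil}(\cdot)+q^{-i/2}\tau^{-\lceil\cdot\rceil}(\cdot)\big)$ where the summands are (for even $i$) twisted copies $\tau^{\pm i/2}(\beta)$ and (for odd $i$) the diagrams $\tau^{\pm(i-1)/2}(\overline\al),\tau^{\pm(i-1)/2}(\underline\al)$. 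The key point is that all these diagrams are \emph{distinct} simple $\cM$-tangle diagrams, hence distinct elements of the basis $\bar{\sfB}$ (the twist $\tau$ along $p_\al$ has infinite order and acts freely here because $\beta$ crosses $p_\al$ exactly once, so different powers give non-isotopic diagrams; and the $\overline\al,\underline\al$ family is disjoint from the $\beta$ family as one contains an arc incident to $p'$ and the other does not). Since $\beta$ and $R_n(\al)$ are both elements (or $\bZ_{\geq0}[q^{\pm1/2}]$-combinations of elements, after re-expanding in $\sfB_{\mathbf{Q}}$) whose product must have coefficients in $\bZ_{\geq0}[q^{\pm1/2}]$ when expanded in $\bar{\sfB}$ — because $\bar{\sfB}$ is obtained from $\sfB_{\mathbf{Q}}$ by an upper-triangular change of basis that one checks preserves... actually the cleanest route is: $\beta\in \sfB_{\mathbf{Q}}$ and $R_n(\al)\in \sfB_{\mathbf{Q}}$ up to the loop/arc bookkeeping, so $\beta\cdot R_n(\al)$ has $\sfB_{\mathbf{Q}}$-coefficients in $\bZ_{\geq0}[q^{\pm1/2}]$; comparing with the explicit expansion above forces each $c_{n,i}\in\bZ_{\geq0}[q^{\pm1/2}]$. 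This gives $(R_n(x))\geq(\barT_n(x))$, and matching the $n=1$ term gives $R_1(x)=x$.

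The main obstacle I anticipate is the bookkeeping in the second part: one must carefully verify that the coefficient of a fixed twisted diagram $\tau^{m}(\overline\al)$ (or $\tau^m(\beta)$) in the product $\beta\cdot R_n(\al)$, expressed in the basis $\sfB_{\mathbf{Q}}$ rather than in $\bar{\sfB}$, is still exactly $c_{n,|2m|+1}$ (resp. $c_{n,2m}$) up to a monomial in $q^{\pm1/2}$ — that is, that no cancellation or contribution from other terms of $R_n(\al)$ or from the lower-order resolution terms in \eqref{eq;once} pollutes this coefficient. This requires knowing that the diagrams appearing on the right of \eqref{eq;once} for distinct $i$ are genuinely distinct as elements of $\sfB_{\mathbf{Q}}$, which in turn uses that $\sfB_{\mathbf{Q}}$ and $\bar{\sfB}$ are related by a change of basis that is triangular with respect to the natural ``complexity'' filtration on $\cM$-tangle diagrams, so that the leading (top-complexity) term of each $\tau^m(\overline\al)$ is a distinct element of $\sfB_{\mathbf{Q}}$. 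The positivity hypothesis then propagates from $\bar{\sfB}$-coefficients to $\sfB_{\mathbf{Q}}$-coefficients on the relevant leading terms. A secondary subtlety is handling the excluded small surfaces and the four-punctured sphere's extra condition \eqref{condi;4-punc}, which is why the hypothesis is stated as ``planar with $\geq 4$ punctures or non-planar with $\geq 3$ punctures'' — in all these cases the auxiliary arc $\beta$ can be chosen so that the local picture of Figure \ref{Fig_local} embeds, and the relevant basis elements are non-isotopic.
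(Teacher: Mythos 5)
Your overall strategy matches the paper's (reduce the $(Q_n)$ half to Theorem \ref{Thm_Le}, and handle $(R_n)$ by multiplying $R_n(\al)$ by an auxiliary arc $\beta$ meeting $p_\al$ once and using the Dehn-twist formula \eqref{eq;once}), but both halves of your plan have a genuine gap. For the $(Q_n)$ half: the span of the loop-only elements of $\sfB_{\mathbf{Q}}$ is neither the arc-free part of $\sSq(\Sigma)$ nor closed under multiplication. A simple loop $\gamma$ bounding a twice-punctured disk is excluded by condition (1) of \eqref{def;basis}; in $\sfB_{\mathbf{Q}}$ it is encoded through $R_{2k}(\al)$ for the arc $\al$ joining those two punctures, and for a general normalized sequence $R_{2k}(\al)$ contains arcs. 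Since a product of two loop-only basis elements can produce such a $\gamma$ after resolving crossings, its $\sfB_{\mathbf{Q}}$-expansion involves the $(R_n)$-part of the basis, so there is no well-defined ``restricted positive basis of the Kauffman bracket skein algebra coming from $(Q_n)$'' to feed into Theorem \ref{Thm_Le} as a black box. (Also, the arc-free subalgebra is not isomorphic to $\sS_q(\Sigma)$: relation (C) turns peripheral loops into scalars.) The paper instead re-runs the proof of Theorem \ref{Thm_Le} inside $\sSq(\Sigma)$; this works because the specific products used there involve only loops satisfying condition (1), to which the positivity hypothesis on $\sfB_{\mathbf{Q}}$ applies directly.

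For the $(R_n)$ half, the step where you invoke positivity is circular: you want the $\sfB_{\mathbf{Q}}$-coefficients of $\beta\cdot R_n(\al)$ to be non-negative, but $\beta$ lies in $\sfB_{\mathbf{Q}}$ only if $R_1(x)=x$, which is part of what must be proved; a priori $R_1(x)=x+a$ and $\beta=R_1(\beta)-a$. The positivity hypothesis applies to $R_1(\beta)\cdot R_n(\al)$, and after expanding via \eqref{eq;once} every single arc $\delta$ appearing on the right must itself be rewritten as $R_1(\delta)-a$ before coefficients can be read off. Doing so produces, besides the coefficients $a$ and $q^{\pm\ast}c_k$ of $R_n(\al)$, $R_1(\tau^{\pm k}(\beta))$, $R_1(\tau^{k}(\overline{\al}))$, $R_1(\tau^{-k}(\underline{\al}))$, a constant term $d=-a\bigl(c_0+\sum_k c_{2k}(q^k+q^{-k})+\sum_k c_{2k-1}(q^{(2k+1)/2}+q^{-(2k+1)/2})\bigr)$. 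Positivity forces $a$, all $c_k$, and $d$ into $\bZ_{\geq0}[q^{\pm1/2}]$; since then $-d\in\bZ_{\geq0}[q^{\pm1/2}]$ as well and the bracketed factor is nonzero (e.g.\ $c_n=1$), one concludes $d=0$ and hence $a=0$. This constant-term argument is the actual content of the paper's proof of $R_1(x)=x$ and is missing from your plan; ``matching the $n=1$ term'' does not rule out $a\neq0$. By contrast, the obstacle you anticipate (triangularity of the change of basis with respect to a complexity filtration) is a non-issue: the only rewriting needed is $\delta=R_1(\delta)-a$ for single arcs, and the pairwise distinctness of $\tau^{\pm k}(\beta)$, $\tau^{k}(\overline{\al})$, $\tau^{-k}(\underline{\al})$ holds for exactly the reasons you give.
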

\begin{proof}Although Theorem \ref{Thm_Le} is for skein algebras, the claim and proof also work for Roger--Yang skein algebras. 
Hence the claims $(Q_n(x))\geq (\barT_n(x))$ and $Q_1(x)=x$ follow from Theorem \ref{Thm_Le}.

To show the claims $(R_n(x))\geq (\barT_n(x))$ and $R_1(x)=x$, we follow an outline of the proof of Theorem 3.2 in \cite{Le18}. 
From the condition of $\Sigma$, there are 3 distinct punctures $p, p', p''$. 
Fix an arc $\al$ connecting $p$ and $p'$ and an arc $\beta$ connecting $p$ and $p''$ such that $\beta$ intersects $p_\al$ exactly once. See Figure \ref{Fig_local}.

By definition, $R_1(x)=x+a\ (a\in \bZ[q^{\pm1/2}])$. 

Note that $R_n(\al)$ is a $\bZ[q^{\pm1/2}]$-linear combination of $p_\al^n$ and $p_\al^n\al\ (n\in \bZ_{\geq0})$. 
Since $\sfB$ is a $\bZ[q^{\pm1/2}]$-basis of $\sSq(\Sigma)$ and $p_\al^n,p_\al^n\al\ (n\in \bZ_{\geq0})$ appear only in $P_n(\al)\ (n\in \bZ_{\geq0})$, there are $c_k\in \bZ[q^{\pm1/2}]$ such that 
$$ 
R_n(\al)=\sum_{k=0}^{n}c_k \barT_k(\al).
$$
Then, we have
\begin{eqnarray}\label{eq;lower}
R_1(\beta) R_n(\al)&=&aR_n(\al)+\beta \sum_{k=0}^{n}c_k \barT_k(\al)
=a R_n(\al)+c_0 \beta +\sum_{k=1}^{n}c_k \beta \barT_k(\al)\nonumber\\
&=&a R_n(\al)+c_0 \beta +\sum_{k=1}^{\lfloor (n+1)/2 \rfloor}c_{2k} \{q^{k}\tau^{k}(\beta)+q^{-k}\tau^{-k}(\beta)\}\nonumber\\
&\ &+\sum_{k=1}^{\lfloor (n+1)/2 \rfloor}c_{2k-1}\big( q^{(2k+1)/2}\tau^{k}(\overline{\al})+q^{-(2k+1)/2}\tau^{-k}(\underline{\al})\big),
\end{eqnarray}
where the last equality follows from (\ref{eq;once}). 
By inserting $x=R_1(x)-a$ to (\ref{eq;lower}), we have 
\begin{eqnarray*}
R_1(\beta) R_n(\al)&=& a R_n(\al)+c_0 (R_1(\beta)-a) +\sum_{k=1}^{\lfloor (n+1)/2 \rfloor}c_{2k} \{q^{k}\tau^{k}(R_1(\beta)-a)+q^{-k}\tau^{-k}(R_1(\beta)-a)\}\\
&\ &+\sum_{k=1}^{\lfloor (n+1)/2 \rfloor}c_{2k-1}\big(q^{(2k+1)/2}\tau^{k}(R_1(\overline{\al})-a)+q^{-(2k+1)/2}\tau^{-k}(R_1(\underline{\al})-a)\big)\\
&=& a R_n(\al)+c_0 R_1(\beta) +\sum_{k=1}^{\lfloor (n+1)/2 \rfloor}c_{2k} \{q^{k}\tau^{k}R_1(\beta)+q^{-k}\tau^{-k}R_1(\beta)\}\\
&\ &+\sum_{k=1}^{\lfloor (n+1)/2 \rfloor}c_{2k-1}\big(q^{(2k+1)/2}\tau^{k}R_1(\overline{\al})+q^{-(2k+1)/2}\tau^{-k}R_1(\underline{\al})\big)\\
&\ &-a\Big\{c_0+\sum_{k=1}^{\lfloor (n+1)/2 \rfloor}c_{2k} (q^{k}+q^{-k})+\sum_{k=1}^{\lfloor (n+1)/2 \rfloor}c_{2k-1}(q^{(2k+1)/2}+q^{-(2k+1)/2})\Big\}.
\end{eqnarray*}
From the positivity of $(R_n(x))$, the coefficients of $R_n(\al)$, $R_1(\beta)$, $\tau^{\pm k}(R_1(\beta))$, $\tau^{k}(R_1(\overline{\al}))$, $\tau^{-k}(R_1(\underline{\al}))$ $(k=1,\dots,\lfloor (n+1)/2 \rfloor)$ and the constant term are in $\bZ_{\geq 0}[q^{\pm1/2}]$. 
This implies that $a, c_k\ (k=0,1,\dots, n)$ are in $\bZ_{\geq 0}[q^{\pm1/2}]$ and 
$$
d:=-a\Big\{c_0+\sum_{k=1}^{\lfloor (n+1)/2 \rfloor}c_{2k} (q^{k}+q^{-k})+\sum_{k=1}^{\lfloor (n+1)/2 \rfloor}c_{2k-1}(q^{-(2k+1)/2}+q^{(2k+1)/2})\Big\}\in \bZ_{\geq 0}[q^{\pm1/2}]. 
$$
Since $-d\in \bZ_{\geq 0}[q^{\pm1/2}]$ from $a, c_k\in \bZ_{\geq 0}[q^{\pm1/2}]\ (k=0,1,\dots, n)$, we conclude that $d=0$. 
Since $\bZ[q^{\pm1/2}]$ is a domain and the second factor of $d$, i.e. the linear sum in the curly brackets is not 0, $d=0$ implies that $a=0$.  
\end{proof}
\begin{rmk}\label{Rmk_non-triv}
For the Muller skein algebra \cite{Mul}, 
we use $(x^{n})$ for arcs to give a lower bound of positive pairs of normalized sequences \cite{Le18}. 
However, for the Roger--Yang skein algebra, we use $\barT_n(x)$ to give a lower bound of positive pairs of normalized sequences.
\end{rmk}

\section{Partial evidence for positivity conjecture}
From (\ref{formula:squared}) and (\ref{rel:Cheby_SS}), for $m,n\geq 0$
\begin{eqnarray*}
\barT_{2n+1}(\al)\cdot \barT_{2m+1}(\al)&=&\big(S_{n}(p_\al)-S_{n-1}(p_\al)\big)\big(S_{m}(p_\al)-S_{m-1}(p_\al)\big)(p_\al+2)\\
&=&T_{n+m+1}(p_\al)+T_{|n-m|}(p_\al)=\barT_{2(n+m+1)}(\al)+\barT_{2|n-m|}(\al).
\end{eqnarray*}
Similar to this, from (\ref{rel:Cheby_TS}) and (\ref{rel:Chebyshev}), we have the positivity of $P_{n}(\al)\cdot P_{m}(\al)$ for other cases.

Let $\bD=\{(x,y)\in \bR^2\mid  \sqrt{x^2+y^2}<3\}$ and $\cM'=\{(1,0),(-1,0)\}$. We identify $\bD$ and a small neighborhood of an arc $\al$ connecting different punctures through some homeomorphism so that $\cM'\subset \cM$. Then, we treat $\cM$-tangle diagrams locally and refer to them as \emph{partial $\cM$-tangle diagrams} on $(\bD,\cM')$. 
\begin{figure}[ht]
    \centering
    \includegraphics[width=70pt]{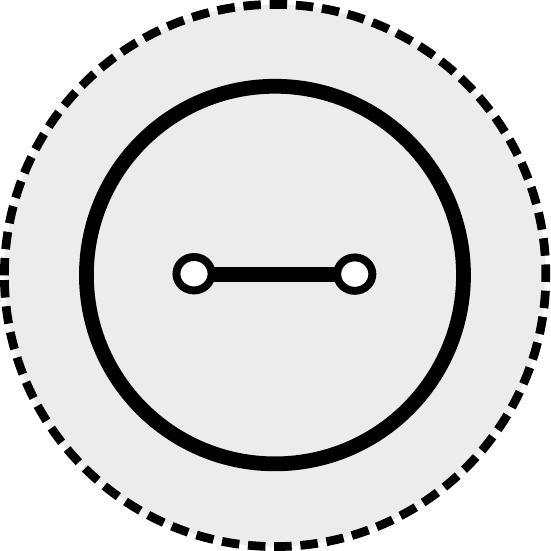}
    \caption{Picture of $(\bD,\cM')$. The arc is $\alpha$ and the simple closed curve is $p_\al$.}\label{Fig_disk}
\end{figure}

A $\bZ[q^{\pm 1/2}]$-linear combination of partial $\cM$-tangle diagrams on $(\bD,\cM')$ is \emph{symmetric} if it is invariant under each of replacing $q^{1/2}$ with $q^{-1/2}$ and taking the mirror image with respect to the $y$-axis. 

Put $z=\{(0,y)\mid  -3< y < 3\}$, $\al=\{(x,0)\mid  -1\leq x\leq 1\}$ and $p_\al=\{(x,y)\mid \sqrt{x^2+y^2}=2\}$. 
Let $\sigma$ be the left-handed half Dehn-twist along $p_\al$. 
Consider the partial $\cM$-tangle diagrams on $(\bD,\cM')$ 
\begin{align*}
\begin{array}{c}\includegraphics[scale=0.25]{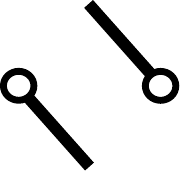}\end{array}=\{(x,y)\in \bD\mid  \text{$y=-3x+3\varepsilon$ with $0< |x|\leq 1$ and $\varepsilon=x/|x|$}\},\\
\begin{array}{c}\includegraphics[scale=0.25]{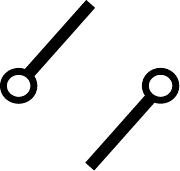}\end{array}=\{(x,y)\in \bD\mid  \text{$y=3x+3\varepsilon$ with $0< |x|\leq 1$ and $\varepsilon=-x/|x|$}\}.
\end{align*}
Then, the following is a partial evidence to establish the positivity conjecture of Roger--Yang skein algebras. 
\begin{thm}\label{Thm_formula}
For $n\geq 0$, 
$z\cdot \barT_{2n}(\al)$ is symmetric and 
\begin{align*}
z\cdot \barT_{2n+1}(\al)=
q^{2n+1}\sigma^{n}(\begin{array}{c}\includegraphics[scale=0.25]{draws/arcs_tr_bl.pdf}\end{array})+q^{-(2n+1)}\sigma^{-n}(\begin{array}{c}\includegraphics[scale=0.25]{draws/arcs_tl_br.pdf}\end{array})+\text{{\rm (symmetric part)}}. 
\end{align*}
In addition, the coefficients of the symmetric part are in $\bZ_{\geq 0}[q^{\pm1/2}]$.
\end{thm}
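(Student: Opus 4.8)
The plan is to reduce the computation of $z\cdot\barT_{2n+1}(\al)$ to a local statement in $(\BD,\cM')$ by using the product formulas already established for $\barT$ on a single arc. First I would observe that $z$ intersects $\al$ in one point, so the local picture near $\al$ is exactly the ``once-incident'' configuration of Figure~\ref{Fig_local} transported to the disk; thus $z$ plays the role of $\beta$, the half-twist $\sigma$ satisfies $\sigma^2=\tau$ (the full left-handed Dehn twist along $p_\al$), and $\sigma$ interchanges the two partial diagrams $\begin{array}{c}\includegraphics[scale=0.25]{draws/arcs_tr_bl.pdf}\end{array}$ and $\begin{array}{c}\includegraphics[scale=0.25]{draws/arcs_tl_br.pdf}\end{array}$ appropriately. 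The even case $z\cdot\barT_{2n}(\al)$ involves only $p_\al$ (no arc is present, by \eqref{rel:Cheby_even}), so by \eqref{formula:squared} it equals $z\cdot T_n(p_\al^{(2)})$ — a polynomial in $p_\al$ — and symmetry under $q^{1/2}\mapsto q^{-1/2}$ together with the reflection across the $y$-axis is visible from the skein relation (A) being symmetric and from $z$, $p_\al$ being reflection-invariant. This handles the first assertion.

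For the odd case I would argue by induction on $n$, mirroring the proof of Proposition~\ref{Prop_twist}. The base case $n=0$ is a direct computation: $z\cdot\barT_1(\al)=z\cdot\al$ resolves via relation~(D) into $q^{1/2}\begin{array}{c}\includegraphics[scale=0.25]{draws/arcs_tr_bl.pdf}\end{array}+q^{-1/2}\begin{array}{c}\includegraphics[scale=0.25]{draws/arcs_tl_br.pdf}\end{array}$ up to the power normalization, which matches the claimed form with an empty symmetric part. For the inductive step I would use $\barT_{2n+1}(x)=(S_n-S_{n-1})(x^2-2)\cdot x$ from \eqref{rel;important} together with \eqref{formula:squared} to write $\barT_{2n+1}(\al)=(S_n(p_\al)-S_{n-1}(p_\al))\al$, and then apply the Chebyshev identity $S_{n+1}-S_n=T_{n+1}-(S_n-S_{n-1})$ exactly as in Proposition~\ref{Prop_twist}. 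Multiplying by $z$ on the left and invoking the once-incident formula \eqref{eq;once} (with $z$ in place of $\beta$ and $\sigma^2=\tau$), the leading terms telescope to $q^{2(n+1)+1}\sigma^{n+1}(\begin{array}{c}\includegraphics[scale=0.25]{draws/arcs_tr_bl.pdf}\end{array})+q^{-(2(n+1)+1)}\sigma^{-(n+1)}(\begin{array}{c}\includegraphics[scale=0.25]{draws/arcs_tl_br.pdf}\end{array})$, and everything else — the $T_{n+1}(p_\al)$ contribution, which is a symmetric polynomial in $p_\al$ times $z\cdot\al$ resolved symmetrically, together with the previous symmetric part — is again symmetric by the induction hypothesis and the reflection/bar symmetry of relations (A), (D).

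The positivity of the coefficients of the symmetric part is the step I expect to be the main obstacle, since symmetry alone does not force non-negativity. Here I would argue as follows: the symmetric contributions all arise from expanding $z\cdot T_k(p_\al)\cdot\al$ and $z\cdot(S_j-S_{j-1})(p_\al)\cdot\al$ for intermediate $k,j$, i.e. from terms of the shape $z\cdot\barT_{2j+1}(\al)$ and $z\cdot\barT_{2j}(\al)$ with $j<n$; by induction the odd ones split off their two extreme $\sigma^{\pm j}$-terms with manifestly positive coefficients $q^{\pm(2j+1)}$, and their remaining symmetric parts are positive; the even ones $z\cdot T_j(p_\al^{(2)})$ expand, via relation (A) applied at the single crossing of $z$ with each parallel copy of $p_\al$, into a $\bZ_{\geq0}[q^{\pm1/2}]$-combination of partial $\cM$-tangle diagrams because $T_j$ of the Kauffman bracket loop is known (Thurston/Bonahon–Wong; cf. the discussion after \eqref{basis}) to have a positive expansion in the bracelets-type partial diagrams on the annular neighborhood of $p_\al$. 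Collecting these, all symmetric contributions are $\bZ_{\geq0}[q^{\pm1/2}]$-combinations of partial $\cM$-tangle diagrams, and since the partial diagrams are $\bZ[q^{\pm1/2}]$-linearly independent, the total coefficient of each is a sum of non-negative ones, hence non-negative. The delicate point to get right is bookkeeping the powers of $q^{1/2}$ so that the two extreme terms are not accidentally reabsorbed into the symmetric part and so that the symmetric part genuinely stays Laurent-positive after the telescoping cancellations; I would track this with the explicit exponents appearing in \eqref{eq;once} and Proposition~\ref{Prop_twist}.
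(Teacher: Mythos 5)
There is a genuine gap, and it sits at the very first step of your reduction. You transport the configuration to $(\BD,\cM')$ and declare that ``$z$ plays the role of $\beta$'' in the once-incident formula \eqref{eq;once} (equivalently Propositions \ref{Prop_Le} and \ref{Prop_twist}). But those statements require the auxiliary curve to meet $p_\al$ in \emph{exactly one} point and to meet $\al$ at the shared puncture $p$ (so that the product is resolved by relation (D)). The curve $z=\{(0,y)\}$ meets the circle $p_\al$ in \emph{two} points, $(0,\pm2)$, and meets $\al$ in an interior transversal crossing resolved by relation (A) (indeed the base case is $z\cdot\barT_1(\al)=q(\cdot)+q^{-1}(\cdot)$ with exponents $\pm1$, not $\pm1/2$ as you write). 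For a curve meeting $p_\al$ twice, $z\cdot T_n(p_\al)$ is \emph{not} $q^{n}\tau^{n}(z)+q^{-n}\tau^{-n}(z)$: already $z\cdot p_\al=q^{2}\sigma(z)+q^{-2}\sigma^{-1}(z)+(q+q^{-1})(L+R)$ carries an extra term supported on the two half-circles $L,R$. This is precisely why Theorem \ref{Thm_formula} has a nontrivial symmetric remainder, whereas \eqref{eq;once} has none; an argument that literally reuses the once-incident telescoping would produce no symmetric part at all, so it cannot be the right mechanism.

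The downstream consequences are that neither the claimed telescoping nor the positivity argument survives. The paper instead runs the induction on the two-step recursion $\barT_{2n+1}(\al)=p_\al\cdot\barT_{2n-1}(\al)-\barT_{2n-3}(\al)$ (and its even analogue), feeding the extra $(L+R)$ contributions back through the recursion and arriving at the closed formulas \eqref{eq;twice} and \eqref{eq;twice2}, whose symmetric parts have coefficients $S_{i-1}(q^2+q^{-2})$ and $\barT_i(q^2+q^{-2})$; positivity is then read off from $S_k(x+x^{-1})=\sum_{i=0}^k x^{k-2i}$ and $T_i(x+x^{-1})=x^i+x^{-i}$. Your positivity argument does not substitute for this: the recursions carry minus signs, so ``each contribution is a sum of non-negative pieces'' is exactly what must be proved, not assumed; the appeal to Thurston/Bonahon--Wong bracelets positivity is not an applicable local statement (and for general surfaces it is the open conjecture this paper is generalizing); and the phrase ``the single crossing of $z$ with each parallel copy of $p_\al$'' repeats the same miscount of intersection points. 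The even-case symmetry claim is plausible but also needs the explicit expansion to certify positivity of its coefficients.
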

\begin{proof}
\noindent {\bf Odd case.} 
By concrete computations, it is easy to see that 
\begin{align*}
&z\cdot \barT_1(\al)=q\begin{array}{c}\includegraphics[scale=0.25]{draws/arcs_tr_bl.pdf}\end{array}+q^{-1}\begin{array}{c}\includegraphics[scale=0.25]{draws/arcs_tl_br.pdf}\end{array}\\
&z\cdot \barT_3(\al)=q^3\sigma(\begin{array}{c}\includegraphics[scale=0.25]{draws/arcs_tr_bl.pdf}\end{array})+q^{-3}\sigma^{-1}(\begin{array}{c}\includegraphics[scale=0.25]{draws/arcs_tl_br.pdf}\end{array})+(q+q^{-1})(L+R)\al\\
&z\cdot \barT_5(\al)=q^5\sigma^{2}(\begin{array}{c}\includegraphics[scale=0.25]{draws/arcs_tr_bl.pdf}\end{array})+q^{-5}\sigma^{-2}(\begin{array}{c}\includegraphics[scale=0.25]{draws/arcs_tl_br.pdf}\end{array})+(L+R)\{(q^3+q^{-3})+(q+q^{-1})p_\al\}\al,
\end{align*}
where $L,R$ are partial $\cM$-tangle diagrams on $(\bD,\cM')$ given by $L=\{(2\cos \theta, 3\sin \theta)\mid \pi/2 < \theta < 3\pi/2 \}$ and $R=\{(2\cos \theta, 3\sin \theta)\mid -\pi/2 < \theta < \pi/2 \}$.

For $n\geq 3$, we show 
\begin{eqnarray}
&\hspace{0mm}&\hspace{-5mm}z\cdot \barT_{2n+1}(\al)\nonumber\\
&=&q^{2n+1}\sigma^{n}(\begin{array}{c}\includegraphics[scale=0.25]{draws/arcs_tr_bl.pdf}\end{array})+q^{-(2n+1)}\sigma^{-n}(\begin{array}{c}\includegraphics[scale=0.25]{draws/arcs_tl_br.pdf}\end{array})\nonumber\\
&\ &+(L+R)\{(q^{2n-1}+q^{-(2n-1)})\al+(q+q^{-1})\sum_{i=1}^{n-1}S_{i-1}(q^2+q^{-2})\barT_{2n-2i+1}(\al)\}.\label{eq;twice}
\end{eqnarray}
Indeed, it follows from the following computations; 
\begin{eqnarray*}
&\hspace{0mm}&\hspace{-5mm}z\cdot \barT_{2n+1}(\al)=z (p_\al\cdot \barT_{2n-1}(\al)-\barT_{2n-3}(\al))\\
&=&\{(q^2\sigma(z)+q^{-2}\sigma^{-1}(z)+(q+q^{-1})(L+R)\}\barT_{2n-1}(\al)-z\cdot \barT_{2n-3}(\al)\\
&=&q^{2n+1}\sigma^{n}(\begin{array}{c}\includegraphics[scale=0.25]{draws/arcs_tr_bl.pdf}\end{array})+q^{-(2n+1)}\sigma^{-n}(\begin{array}{c}\includegraphics[scale=0.25]{draws/arcs_tl_br.pdf}\end{array})+q^{2n-3}\sigma^{n-2}(\begin{array}{c}\includegraphics[scale=0.25]{draws/arcs_tr_bl.pdf}\end{array})+q^{-(2n-3)}\sigma^{-(n-2)}(\begin{array}{c}\includegraphics[scale=0.25]{draws/arcs_tl_br.pdf}\end{array})\\
&\ &+(q^2+q^{-2})(L+R)\{(q^{2n-3}+q^{-(2n-3)})\al+(q+q^{-1})\sum_{i=1}^{n-2}S_{i-1}(q^2+q^{-2})\barT_{2n-2i-1}(\al)\}\\
&\ &+(L+R)(q+q^{-1})\barT_{2n-1}(\al)-\big\{q^{2n-3}\sigma^{n-2}(\begin{array}{c}\includegraphics[scale=0.25]{draws/arcs_tr_bl.pdf}\end{array})+q^{-(2n-3)}\sigma^{-(n-2)}(\begin{array}{c}\includegraphics[scale=0.25]{draws/arcs_tl_br.pdf}\end{array})\\
&\ &\quad+(L+R)\big((q^{2n-5}+q^{-(2n-5)})\al+(q+q^{-1})\sum_{i=1}^{n-3}S_{i-1}(q^2+q^{-2})\barT_{2n-2i-3}(\al)\big)\big\}\\
&=&q^{2n+1}\sigma^{n}(\begin{array}{c}\includegraphics[scale=0.25]{draws/arcs_tr_bl.pdf}\end{array})+q^{-(2n+1)}\sigma^{-n}(\begin{array}{c}\includegraphics[scale=0.25]{draws/arcs_tl_br.pdf}\end{array})\\
&\ &+(L+R)\{(q^{2n-1}+q^{-(2n-1)})\al+(q+q^{-1})\sum_{i=1}^{n-1}S_{i-1}(q^2+q^{-2})\barT_{2n-2i+3}(\al)\}, 
\end{eqnarray*}
where the last equality follows from $S_k(x)=xS_{k-1}(x)-S_{k-2}(x)$ with $x=q^2+q^{-2}$. 
From $S_k(x+x^{-1})=\sum_{i=0}^k x^{k-2i}$, we conclude that the coefficients are in $\bZ_{\geq 0}[q^{\pm1/2}]$.

\noindent {\bf Even case.} 
The case of $n=0$ is trivial. By concrete computations, it is easy to see that 
\begin{align*}
&z\cdot \barT_2(\al)=(q^2\sigma+q^{-2}\sigma^{-1})(z)+(q+q^{-1})(L+R)\\
&z\cdot \barT_4(\al)=(q^4\sigma^2+q^{-4}\sigma^{-2})(z)+(q+q^{-1})(L+R)\{(q^2+q^{-2})+p_\al\}
\end{align*}

Similarly to the odd case, we obtain
\begin{align}
z\cdot \barT_{2n}(\al)=T_n(q^2\sigma+ q^{-2}\sigma^{-1})(z)+(q+q^{-1})(L+R)\sum_{i=0}^{n-1}\overline{T}_i(q^2+q^{-2})\overline{T}_{n-i-1}(p_\al) \label{eq;twice2}
\end{align}
for $n\geq3$, where $\barT_n(x)$ is defined by (\ref{def;barT}).
From $T_i(x+x^{-1})=x^{i}+x^{-i}$, the claim holds.
\end{proof}

\begin{rmk}
Similarly to the proof of Theorem \ref{Thm_formula}, we have \begin{align*}\barT_{2n+1}(\al)\cdot z =q^{-(2n+1)}\sigma^{n}(\begin{array}{c}\includegraphics[scale=0.25]{draws/arcs_tr_bl.pdf}\end{array})+q^{2n+1}\sigma^{-n}(\begin{array}{c}\includegraphics[scale=0.25]{draws/arcs_tl_br.pdf}\end{array})+\text{{\rm (symmetric part)}}, 
\end{align*}
where the symmetric part is exactly the same with that in Theorem \ref{Thm_formula}.  
\end{rmk}

\begin{cor}\label{Cor_transp}
If $\cR=\bC$ and $q^2$ is a primitive root of unity of order $N$, 
then $\barT_{N}(\al)$ is transparent. 
\end{cor}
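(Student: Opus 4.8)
The plan is to show that for $q^2$ a primitive $N$-th root of unity, the element $\barT_N(\al)$ commutes (up to no correction terms) with every $\cM$-tangle diagram, and the local computation of Theorem \ref{Thm_formula} is exactly what makes this work. Transparency is a local condition: it suffices to check that $\barT_N(\al)$ slides through any strand crossing the region $(\bD,\cM')$, i.e. that the partial $\cM$-tangle diagram $z\cdot \barT_N(\al)$ equals $\barT_N(\al)\cdot z$ inside $\sSq(\Sigma)$ for the vertical strand $z$ (and likewise for a strand running the other way, or an arc ending at a puncture inside $\bD$). So the first step is to reduce transparency of $\barT_N(\al)$ to the identity $z\cdot \barT_N(\al)=\barT_N(\al)\cdot z$ together with the analogous identities for the few local models of a strand passing through $(\bD,\cM')$.

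Next I would split into the two parity cases of $N$, using Theorem \ref{Thm_formula} and the Remark immediately following it. In the odd case $N=2n+1$, Theorem \ref{Thm_formula} gives $z\cdot\barT_N(\al)=q^{N}\sigma^{n}(\nwarrow\!\!\searrow)+q^{-N}\sigma^{-n}(\nearrow\!\!\swarrow)+(\text{symmetric part})$, while the Remark gives $\barT_N(\al)\cdot z=q^{-N}\sigma^{n}(\nwarrow\!\!\searrow)+q^{N}\sigma^{-n}(\nearrow\!\!\swarrow)+(\text{the same symmetric part})$. Since $q^2$ is a primitive $N$-th root of unity, $q^{2N}=1$, so $q^{N}=q^{-N}$ (here $q^{1/2}$ is a fixed scalar and $q^N=\pm1$, but in any case $q^N=q^{-N}$); hence the two non-symmetric terms in $z\cdot\barT_N(\al)$ and $\barT_N(\al)\cdot z$ agree and the symmetric parts agree by construction, giving $z\cdot\barT_N(\al)=\barT_N(\al)\cdot z$. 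In the even case $N=2n$, equation \eqref{eq;twice2} writes $z\cdot\barT_{2n}(\al)=T_n(q^2\sigma+q^{-2}\sigma^{-1})(z)+(q+q^{-1})(L+R)\sum_{i}\barT_i(q^2+q^{-2})\barT_{n-i-1}(p_\al)$; the second summand is manifestly symmetric, hence left-right symmetric, and for the first summand I would observe that $T_n(q^2\sigma+q^{-2}\sigma^{-1})$, when $q^{2n}=q^{2\cdot N}$... more carefully: with $q^2$ primitive of order $N=2n$ one has $(q^2)^n=-1$, so $q^{2n}\sigma^n+q^{-2n}\sigma^{-n}=-(\sigma^n+\sigma^{-n})$ and more generally the Chebyshev expansion $T_n(q^2\sigma+q^{-2}\sigma^{-1})=\sum (q^2)^{\,j}\sigma^{\,j}$-type terms pairs up $j$ with $-j$ symmetrically; thus this term is also left-right symmetric, so again $z\cdot\barT_N(\al)=\barT_N(\al)\cdot z$.

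Then I would address the remaining local models: a strand entering $(\bD,\cM')$ that does not look like $z$ but crosses $p_\al$ transversally some number of times, or an arc with an endpoint at one of the two punctures of $\cM'$. For a transversal strand one applies the skein relation (A) at each crossing to rewrite the picture in terms of pieces that either miss $p_\al$ entirely — where transparency is automatic since $\barT_N(\al)$ can be isotoped off — or reduce to the model $z$ already handled; the key point is that the Dehn twist $\sigma$ acting on such a strand and the scalars $q^{\pm N}$ behave identically on the two sides, by the same $q^{2N}=1$ collapse. For an arc ending at a puncture inside $\bD$, relation (C) (a loop around a puncture equals $(q+q^{-1})$ times the empty diagram) together with \eqref{formula:squared} lets one absorb $p_\al$-parallel copies, and the Chebyshev identity \eqref{rel:Cheby_even} reduces $\barT_{2n}(\al)=\barT_n(p_\al)$ to an honest loop polynomial, which is transparent by the usual Frobenius-type argument at a root of unity (as for Kauffman bracket skein algebras).

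The main obstacle I expect is bookkeeping the finitely many local configurations of a strand crossing $(\bD,\cM')$ and checking that Theorem \ref{Thm_formula} plus its Remark genuinely cover all of them after applying relation (A); in particular one must be careful that the "symmetric part" produced on the two sides ($z\cdot\barT_N(\al)$ versus $\barT_N(\al)\cdot z$) is literally the same element, not merely both symmetric — this is asserted in the Remark but should be used explicitly. A secondary subtlety is the precise meaning of $q^{N}=q^{-N}$: since the coefficient ring is $\bC$ with $q^{1/2}$ a chosen scalar, $q^2$ primitive of order $N$ forces $q^{2N}=1$, hence $q^N\in\{\pm1\}$ and $q^N=q^{-N}$, which is all that is needed; I would state this cleanly at the start so the parity cases go through uniformly.
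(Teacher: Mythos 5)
Your treatment of the through-strand model $z$ is essentially the paper's argument: both rest on Theorem \ref{Thm_formula}, the remark following it, and the observation that $q^{2N}=1$ forces $q^{N}=q^{-N}$ (resp.\ $(q^2)^{N/2}=-1$ in the even case via \eqref{eq;twice2}), so the non-symmetric terms of $z\cdot\barT_N(\al)$ and $\barT_N(\al)\cdot z$ coincide while the symmetric parts are literally equal. The genuine gap is in the second local model, an arc ending at one of the two punctures of $\al$ inside $(\bD,\cM')$. The paper handles this by the restriction to $\bD$ of \eqref{eq;once} with $\beta'=\{(x,0)\in\bD\mid -3<x\leq -1\}$ in place of $\beta$; that formula comes from Propositions \ref{Prop_Le} and \ref{Prop_twist}, and in the odd case it passes through relation (D) at the shared puncture, because $\barT_{2n+1}(\al)$ contains the arc $\al$ itself. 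Your proposal replaces this with relation (C), \eqref{formula:squared} and \eqref{rel:Cheby_even}, reducing to ``an honest loop polynomial'' plus ``the usual Frobenius-type argument.'' That covers at most the even case; for odd $N$ you say nothing about how a puncture-ending arc slides past the arc component of $\barT_N(\al)$, which is precisely the new phenomenon of the Roger--Yang setting and the reason Proposition \ref{Prop_twist} exists.

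Even for even $N$ the appeal to the standard loop result is off by a factor of two in the exponent. For a strand meeting $p_\al$ once, Proposition \ref{Prop_Le} and its mirror give $\beta'\cdot T_M(p_\al)-T_M(p_\al)\cdot\beta'=(q^{M}-q^{-M})\bigl(\tau^{M}(\beta')-\tau^{-M}(\beta')\bigr)$, so transparency of $T_M(p_\al)$ against such strands requires $q^{2M}=1$. Here $\barT_N(\al)=T_{N/2}(p_\al)$, i.e.\ $M=N/2$, and the relevant quantity is $q^{N}=(q^2)^{N/2}$, which is $-1$ rather than $1$ when $N$ is even. So one cannot simply cite the loop case ``at a root of unity'': the once-crossing configuration is exactly where the powers of $q$ are halved relative to the $z$-model, it is the delicate point of the whole corollary, and it must be confronted with the explicit exponents as in the paper's $\beta'$-analogue of \eqref{eq;once} rather than delegated to a general principle.
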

\begin{proof}
In the case of $\cR=\bC$ with a root of unity $q\in \bC^\times$, (\ref{eq;twice}), (\ref{eq;twice2}) and (\ref{eq;once}) also hold. 
Moreover, a similar result of (\ref{eq;once}) restricted to $\bD$ also holds with $\beta'=\{(x,0)\in \bD\mid  -3<x\leq -1\}$ instead of $\beta$. 
This implies that, when $q^2$ is a primitive root of unity of order $N$, $\barT_{N}(\al)$ is transparent since $\barT_N(\al)$ is commutative with $\beta'$ and $z$ respectively. 
\end{proof}
\begin{rmk}
\begin{enumerate}
    \item The transparency of Chebyshev polynomials of the first kind is given in \cite[Corollary 2.3]{Le15}, \cite[Lemma 36]{BW16}. 
    \item Helen Wong and her student have also shown the transparency with almost the same techniques. 
\end{enumerate}
\end{rmk}

\paragraph{{\bf (Non-)positivity for small punctured surfaces}}
For small surfaces, we have some observations on (non-)positivity of Roger--Yang skein algebras. 
Let $\Sigma_{0,2}$ (resp. $\Sigma_{0,3}$) denote the twice- (resp. thrice-)punctured sphere and put $\cR=\bZ[q^{\pm1/2}][p^{\pm1}\mid p\in \cM]$. 
In \cite{BPKW}, an explicit presentation of $\sS_q^{{\rm RY}}(\Sigma_{0,i})\ (i=2,3)$ is given. 
Theorem 2.3 in \cite{BPKW} implies that $\sS_q^{{\rm RY}}(\Sigma_{0,2})$ has the $\cR$-basis consisting of the empty set and the arc connecting the 2 punctures, and it is not a positive basis.  
Theorem 2.5 in \cite{BPKW} means that $\sS_q^{{\rm RY}}(\Sigma_{0,3})$ has the $\cR$-basis consisting of the empty set and the 3 arcs connecting distinct punctures, and it is a positive basis.

\end{document}